\newif\ifPDF
\newtheorem{theorem}{Theorem}[section]
\newtheorem{remark}[theorem]{Remark}
\newcommand{\supp}{\operatorname{supp}}
\newcommand{\dint}{\displaystyle\int}
\newcommand{\eps}{\varepsilon}
\newcommand{\fc}{\mathfrak{c}}
\newcommand{\bbR}{\mathbb R}
\newcommand{\bxi}{\boldsymbol \xi}
 \newcommand{\bn}{\mathbf n}
\newcommand{\bu}{\mathbf u} \newcommand{\bv}{\mathbf v} 
\newcommand{\bw}{\mathbf w} \newcommand{\bx}{\mathbf x} 
\newcommand{\by}{\mathbf y}
\newcommand{\bI}{\mathbf I} 
\newcommand{\bM}{\mathbf M}
 \newcommand{\bT}{\mathbf T}
\newcommand{\cC}{\mathcal C}  
 \newcommand{\cF}{\mathcal F}
\newcommand{\cG}{\mathcal G} \newcommand{\cH}{\mathcal H}
\newcommand{\cI}{\mathcal I} 
\newcommand{\cM}{\mathcal M} 
\newcommand{\cO}{\mathcal O}  
 \newcommand{\cT}{\mathcal T}
\newcommand{\aver}[1]{\langle {#1} \rangle}
\DeclareMathOperator*{\argmin}{arg\,min}
\newcommand{\wh}{\widehat}
\newenvironment{keywords}
{\noindent{\bf Key words.}\small}{\par\vspace{1ex}}
\newenvironment{AMS}
{\noindent{\bf AMS subject classifications 2010.}\small}{\par}
\newcommand{\DELETE}[1]{}
\newcommand{\fD}{\mathfrak{D}}
\title{The quadratic Wasserstein metric for inverse data matching}
\author{
	Bj\"orn Engquist\thanks{Department of Mathematics and the Oden Institute, The University of Texas, Austin, TX 78712;
		\href{mailto:engquist@ices.utexas.edu}{engquist@ices.utexas.edu}
	}
	\and
	Kui Ren\thanks{Department of Applied Physics and Applied Mathematics, Columbia University, New York, NY 10027;
		\href{mailto:kr2002@columbia.edu}{kr2002@columbia.edu}
	}
	\and
	Yunan Yang\thanks{Courant Institute of Mathematical Sciences, New York University, New York, NY 10012; 
		\href{mailto:yunan.yang@nyu.edu}{yunan.yang@nyu.edu}
	}
}
\begin{document}

\maketitle



\begin{abstract}
This work characterizes, analytically and numerically, two major effects of the quadratic Wasserstein ($W_2$) distance as the measure of data discrepancy in computational solutions of inverse problems. First, we show, in the infinite-dimensional setup, that the $W_2$ distance has a smoothing effect on the inversion process, making it robust against high-frequency noise in the data but leading to a reduced resolution for the reconstructed objects at a given noise level. Second, we demonstrate that, for some finite-dimensional problems, the $W_2$ distance leads to optimization problems that have better convexity than the classical $L^2$ and $\dot\cH^{-1}$ distances, making it a more preferred distance to use when solving such inverse matching problems.

\end{abstract}


\begin{keywords}
	quadratic Wasserstein distance, inverse data matching, optimal transport, computational inverse problems, numerical optimization, preconditioning
\end{keywords}


\begin{AMS}
		49N45, 49M99, 65K10, 65N21, 65M32, 74J25, 86A22
\end{AMS}


\section{Introduction}
\label{SEC:Intro}

This paper is concerned with inverse problems where we intend to match certain given data to data predicted by a (discrete or continuous) mathematical model, often called the forward model. To set up the problem, we denote by a function $m(\bx): \bbR^d\to \bbR$ ($d\ge 1$) the input of the mathematical model that we are interested in reconstructing from a given datum $g$. We denote by $f$ the forward operator that maps the unknown quantity $m$ to the datum $g$, that is
\begin{equation}\label{EQ:Nonl IP}
	f(m)=g,
\end{equation}
where the operator $f$ is assumed to be nonlinear in general. We denote by $A:=f'(m_0)$ the linearization of the operator $f$ at the background $m_0$. With a bit of abuse of notation, we write $Am=g$ to denote a linear inverse problem of the form~\eqref{EQ:Nonl IP} where $f(m):=Am$. The space of functions where we take our unknown object $m$, denoted by $\cM$, and datum $g$, denoted by $\cG$, as well as the exact form of the forward operator $f: \cM \mapsto \cG$, will be given later when we study specific problems.


Inverse problems for~\eqref{EQ:Nonl IP} are mostly solved computationally due to the lack of analytic inversion formulas. Numerical methods often reformulate the problem as a data matching problem where one takes the solution as the function $m^*$ that minimizes the data discrepancy, measured in a metric $\fD$, between the model prediction $f(m)$ and the measured datum $g$. That is,
\begin{equation}\label{EQ:Min}
	m^* = \argmin_{m\in \cM}\Phi(m), \quad\mbox{with},\quad \Phi(m):= \dfrac{1}{2} \fD^2(f(m), g).
\end{equation}
The most popular metric used in the past to measure the prediction-data discrepancy is the $L^2$ metric $\fD(f(m), g):=\|f(m)-g\|_{L^2(\bbR^d)}$ due to its mathematical and computational simplicity. Moreover, it is often the case that a regularization term is added to the mismatch functional $\Phi(m)$ to impose extra prior knowledge on the unknown $m$ (besides of the fact that it is an element of $\cM$) to be reconstructed. 

In recent years, the quadratic Wasserstein metric~\cite{AmGi-MOFN13,Santambrogio-Book15,Villani-Book03} is proposed as an alternative for the $L^2$ metric in solving such inverse data matching problems~\cite{BrOpVi-Geophysics10,ChChWuYa-JCP18,EnFr-CMS14,KoPaThSlRo-IEEE17,LiViLeMa-IP19,LeLoScVa-SIAM14,MeBrMeOuVi-GJI16,MeBrMeOuVi-IP16,SoRuGuBu-PICML14,YaEn-Geophysics18}. Numerical experiments suggest that the quadratic Wasserstein metric has attractive properties for some inverse data matching problems that the classical $L^2$ metric does not have~\cite{EnFrYa-CMS16,YaEnSuFr-Geophysics18}. The objective of this work is trying to understand mathematically these numerical observations reported in the literature. More precisely, we attempt to characterize the numerical inversion  of~\eqref{EQ:Nonl IP} based on the quadratic Wasserstein metric and compare that with the inversion based on the classical $L^2$ metric.  

In the rest of the paper, we first briefly review some background materials on the quadratic Wasserstein metric and its connection to inverse data match problems in Section~\ref{SEC:Setup}. We then study in Section~\ref{SEC:Freq} the Fourier domain behavior of the solutions to~\eqref{EQ:Nonl IP} in the asymptotic regime of the Wasserstein metric: the regime where the model prediction $f$ and the datum $g$ are sufficiently close. We show that in the asymptotic regime, the Wasserstein inverse solution tends to be smoother than the $L^2$ based inverse solution. We then show in Section~\ref{SEC:W2 Iterations} that this smoothing effect of the Wasserstein metric also exists in the non-asymptotic regime, but in a less explicit way. In Section~\ref{SEC:W2 De-Conv}, we demonstrate, in perhaps overly simplified settings, some advantages of the Wasserstein metric over the $L^2$ metric in solving some important inverse matching problems: inverse transportation, back-projection from (possibly partial) data, and deconvolution of highly concentrated sources.  Numerical simulations are shown in Section~\ref{SEC:Num} to demonstrate the main findings of our study. Concluding remarks are offered in Section~\ref{SEC:Concl}.

\section{Background and problem setup}
\label{SEC:Setup}

Let $f$ and $g$ be two probability densities on $\bbR^d$ that have the same total mass. The square of the quadratic Wasserstein distance between $f$ and $g$ denoted as $W_2^2(f,g)$, is defined as
\begin{equation}\label{EQ:W2}
	W_2^2(f, g) := \inf_{\bT\in\cT} \int_{\bbR^d} |\bx-\bT(\bx)|^2 f(\bx) d \bx,
\end{equation}
where $\cT$ is the set of measure-preserving maps from $f$ to $g$. The map $\bT$ that achieves the infimum is called the optimal transport map between $f$ and $g$. In the context of~\eqref{EQ:Nonl IP}, the probability density $f$ depends on the unknown function $m$. Therefore, $W_2^2(f, g)$ can be viewed as a mismatch functional of $m$ for solving the inverse problem. 

Since the quadratic Wasserstein distance is only defined between probability measures of the same total mass, one has to normalize $f$ and $g$ and turn them into probability densities when applying them to solve inverse matching problems where $f$ and $g$ cannot be interpreted as nonnegative probability density functions. This introduces new issues that need to be addressed~\cite{EnYa-MAA19}; see more discussions at the end of Section~\eqref{SEC:Concl}. The analysis in the rest of the paper assumes that $f$ and $g$ are both nonnegative and have the same total mass.

It is well-known by now that the quadratic Wasserstein distance between $f$ and $g$ is connected to a weighted $\dot \cH^{-1}$ distance between them; see~\cite[Section 7.6]{Villani-Book03} and~\cite{Loeper-JMPA06, Peyre-ESAIM18}. For any $s\in\bbR$, let $\cH^s(\bbR^d)$ be the space of functions 
\[
	\cH^s(\bbR^d):=\{ m(\bx): \|m\|_{\cH^s(\bbR^d)}^2 := \int_{\bbR^d}\aver{\bxi}^{2s}|\wh m(\bxi)|^2d\bxi<+\infty\}
\]
where $\wh m(\bxi)$ denotes the Fourier transform of $m(\bx)$ and $\aver{\bxi}:=\sqrt{1+|\bxi|^2}$. When $s\ge 0$, $\cH^s(\bbR^d)$ is the usual Hilbert space of functions with $s$ square integrable derivatives, and $\cH^0(\bbR^d)=L^2(\bbR^d)$. The space $\cH^{-s}(\bbR^d)$ with $s>0$ is understood as the dual of $\cH^s(\bbR^d)$. We also introduce the space $\dot \cH^s(\bbR^d)$, $s>0$, with the (semi-) norm $\|\cdot\|_{\dot \cH^s(\bbR^d)}$ defined through the relation
\[
	\|m\|_{\cH^s(\bbR^d)}^2= \|m\|_{L^2(\bbR^d)}^2+\|m\|_{\dot \cH^s(\bbR^d)}^2.
\]
The space $\dot \cH^{-s}(\bbR^d)$ is defined as the dual of $\dot \cH^{s}(\bbR^d)$ via the norm
\begin{equation}
	\|m\|_{\dot \cH^{-s}}:=\sup\{|\aver{w, m}|: \|w\|_{\dot \cH^s}\le 1\}.
\end{equation}
It was shown~\cite[Section 7.6]{Villani-Book03} that \emph{asymptotically} $W_2$ is equivalent to $\dot \cH_{(d\mu)}^{-1}$, where the subscript $(d\mu)$ indicates that the space is defined with respect to the reference probability measure $d\mu=f(\bx) d\bx$. To be precise, if $\mu$ is the probability measure and $d\pi$ is an infinitesimal perturbation that has zero total mass, then
\begin{equation}\label{EQ:W2-Hm1 Asym}
	W_2(\mu, \mu+d\pi)=\|d\pi\|_{\dot \cH_{(d\mu)}^{-1}}+o(d\pi) .
\end{equation}
This fact allows one to show that, for two positive probability measures $\mu$ and $\nu$ with densities $f$ and $g$ that are sufficiently regular, we have the following \emph{non-asymptotic} equivalence between $W_2$ and $\dot\cH_{(d\mu)}^{-1}$:
\begin{equation}\label{EQ:W2-Hm1}
	\fc_1 \|\mu-\nu\|_{\dot \cH_{(d\mu)}^{-1}} \le W_2(\mu, \nu) \le \fc_2 \|\mu-\nu\|_{\dot \cH_{(d\mu)}^{-1}},
\end{equation}
for some constants $\fc_1>0$ and $\fc_2>0$. The second inequality is generic with $\fc_2=2$~\cite[Theorem 1]{Peyre-ESAIM18} while the first inequality, proved in~\cite[Proposition 2.8]{Loeper-JMPA06} and~\cite[Theorem 5]{Peyre-ESAIM18} independently, requires further that $f$ and $g$ be bounded from above.

In the rest of this paper, we study numerical solutions to the inverse data matching problem for~\eqref{EQ:Nonl IP} under three different mismatch functionals:
\begin{equation}\label{EQ:Hs Obj}
	\Phi_{\cH^s}(m)\equiv \frac{1}{2}\|f(m)-g\|^2_{\cH^s}:=\frac{1}{2}\int_{\bbR^d}\aver{\bxi}^{2s}|\wh f(m)(\bxi)-\wh g(\bxi)|^2 d\bxi,
\end{equation}
\begin{equation}\label{EQ:Weighted Hs Obj}
	\Phi_{\cH_{(d\mu)}^s}(m)\equiv \frac{1}{2}\|f(m)-g\|^2_{\cH_{(d\mu)}^s}:=\frac{1}{2}\int_{\bbR^d} \Big|\wh{\omega}\circledast \big[\aver{\bxi}^{s} (\wh f(m)(\bxi)-\wh g(\bxi))\big] \Big|^2 d\bxi,
\end{equation}
where $\omega(\bx)=1/g(\bx)$, $\circledast$ denotes the convolution operation, and
\begin{equation}\label{EQ:W2 Obj}
	\Phi_{W_2}(m)\equiv \frac{1}{2}W_2^2(f(m), g): =\dfrac{1}{2} \inf_{\bT\in\cT} \int_{\bbR^d} |\bx-\bT(\bx)|^2 f(m(\bx)) d \bx.
\end{equation}
Our main goal is to analyze the differences between the Fourier contents of the inverse matching results, a main motivation for us to choose the Fourier domain definition of the $\cH$ norms. These norms allow us to systematically study: (i) the differences between the $L^2$ (i.e. the special case of $s=0$ of $\Phi_{\cH^s}(m)$) and the $\cH^s$, with a positive or negative $s$, inversion results; (ii) the differences between $\cH^s$ and $\cH_{(d\mu)}^s$ inversion results caused by the weight $d\mu$; and (iii) the similarities and differences between $\cH_{(d\mu)}^s$ and $W_2$ inversion results. This is our roadmap toward better understandings of the differences between $L^2$-based and $W_2$-based inverse data matching.

\begin{remark}
	Note that since the $\dot\cH^s$ norm is only a shift away from the corresponding $\cH^s$ norm in the Fourier representation, by replacing $\aver{\bxi}$ with $|\bxi|$, we do not introduce extra mismatch functionals for those (semi-) norms. We will, however, discuss $\dot\cH^s$ inversions when we study the corresponding $\cH^s$ inversions.
\end{remark}

\begin{remark}
	In the definition of the $\cH_{(d\mu)}^s$ objective function, we take the weight function $\omega(\bx)=1/g(\bx)$ such that $\|f-g\|^2_{\dot\cH^{-1}_{(d\mu)}}$ is consistent with the linearization of $W_2^2(f, g)$~\cite{Villani-Book03}.
\end{remark}

We refer interested readers to~\cite{Villani-Book03,Loeper-JMPA06, Peyre-ESAIM18} for technical discussions on the results in~\eqref{EQ:W2-Hm1 Asym} and ~\eqref{EQ:W2-Hm1} (under more general settings than what we present here) that connect $W_2$ with $\dot \cH_{(d\mu)}^{-1}$. For our purpose, these results say that: (i) in the asymptotic regime when two signals $f$ and $g$, both being probability density functions, are sufficiently close to each other, their $W_2$ distance can be well approximated by their $\dot \cH_{(d\mu)}^{-1}$ distance; and (ii) if $W_2(f, g)=0$, then $\|f-g\|_{\dot\cH_{(d\mu)}^{-1}}=0$ and vice versa, that is, the exact matching solutions to the model~\eqref{EQ:Nonl IP}, if exists, are global minimizers to both $\Phi_{W_2}(m)$ and $\Phi_{\cH^{-1}_{(d\mu)}}(m)$. \emph{However, let us emphasize that the non-asymptotic equivalence in~\eqref{EQ:W2-Hm1} does NOT imply that the functional $\Phi_{W_2}(m)$ and $\Phi_{\dot \cH^{-1}_{(d\mu)}}(m)$ (if we define one) have exactly the same optimization landscape}. In fact, numerical evidences show that the two functionals have different optimization landscapes that are both quite different from that of the $L^2$ mismatch functional $\Phi_{L^2}(m):=\Phi_{\cH^0}(m)$; see for instance Section~\eqref{SEC:W2 De-Conv} for analytical and numerical evidences.

\section{Frequency responses in asymptotic regime}
\label{SEC:Freq}

We first study the Fourier-domain behavior of the solutions to~\eqref{EQ:Nonl IP} obtained by minimizing the functionals we introduced in the previous section. At the solution, $f(m)$ and $g$ are sufficiently close to each other. Therefore their $W_2$ distance can be replaced with their $\dot\cH_{(d\mu)}^{-1}$ distance according to~\eqref{EQ:W2-Hm1 Asym}. In the leading order, the $W_2$ solution is simply the $\dot\cH_{(d\mu)}^{-1}$ solution in this regime.

\subsection{$\cH^{s}$-based inverse matching for linear problems}

Let us start with a linear inverse matching problem given by the model:
\begin{equation} \label{EQ:Lin IP}
	Am =g_\delta,
\end{equation}
where $g_\delta$ denotes the datum $g$ in~\eqref{EQ:Nonl IP} polluted by an additive noise introduced in the measuring process. The subscript $\delta$ is used to denote the size  (in appropriate norms to be specified soon) of the noise, that is, the size of $g_\delta -g$. Besides, we assume that $g_\delta$ is still in the range of the operator $A$. When the model is viewed as the linearization of the nonlinear model~\eqref{EQ:Nonl IP}, $m$ should be regarded as the perturbation of the background $m_0$. The model perturbation is also often denoted as $\delta m$.  We assume that the linear operator $A$ is diagonal in the Fourier domain, that is, it has the symbol,
\begin{equation}\label{EQ:IP Symbol}
	\wh A(\bxi)\sim \aver{\bxi}^{-\alpha},
\end{equation}
for some $\alpha\in\bbR$. \emph{This assumption is to make some of the calculations more concise but is not essential as we will comment on later; see Remark~\ref{RMK:Diagonal}}. When the exponent $\alpha> 0$, the operator $A$ is ``smoothing'', in the sense that it maps a given $m$ to an output with better regularity than $m$ itself. The inverse matching problem of solving for $m$ in~\eqref{EQ:Lin IP}, on the other hand, is ill-conditioned (so would be the corresponding nonlinear inverse problem $f(m)=g$ if $A$ is regarded as the linearization of $f$). The size of $\alpha$, to some extent, can describe the degree of ill-conditionedness of the inverse matching problem. 

We assume \emph{a priori} that $m\in \cH^\beta(\bbR^d)$ for some $\beta>0$. Therefore, $A$ could be viewed as an operator $A: \cH^\beta \mapsto \cH^{\beta+\alpha}$. We now look at the inversion of the problem under the $\cH^s$ ($s\le \alpha+\beta$) framework. 

We seek the solution of the inverse problem as the minimizer of the $\cH^s$ functional $\Phi_{\cH^s}(m)$, defined as in~\eqref{EQ:Hs Obj} with $f(m)=Am$ and $g$ replaced with $g_\delta$. We verify that the Fr\'echet derivative of $\Phi_{\cH^s}: \cH^\beta \mapsto \bbR_{\ge 0}$ at $m$ in the direction $\delta m$ is given by
\[
	\Phi_{\cH^s}'(m)[\delta m]= \int_{\bbR^d}  \wh A^*(\bxi)\left\{\aver{\bxi}^{2s} \overline{\left[\wh A(\bxi) \wh m(\bxi)-\wh{g_\delta}(\bxi)\right]}\right\} \wh {\delta m}(\bxi)  d\bxi,
\]
where we used $A^*$ to denote the $L^2$ adjoint of the operator $A$. The minimizer of $\Phi_{\cH^s}$ is located at the place where its Fr\'echet derivative vanishes. Therefore the minimizer solves the following (modified) normal equation at frequency $\bxi$:
\begin{equation}\label{EQ:NE Hs}
	\wh A^*(\bxi) \left\{\aver{\bxi}^{2s} \wh A\right\} \wh m = \wh A^*(\bxi) \left\{\aver{\bxi}^{2s} \wh{g_\delta}(\bxi) \right\}.
\end{equation}
The solution at frequency $\bxi$ is therefore
\[
	\wh m(\bxi) =\Big(\wh A^*(\bxi) \big(\aver{\bxi}^{2s}  \wh A\big)\Big)^{-1} \wh A^*(\bxi) \Big(\aver{\bxi}^{2s}\wh{g_\delta}(\bxi)\Big).
\]
We can then perform an inverse Fourier transform to find the solution in the physical space. The result is
\begin{equation}\label{EQ:Hs Inversion Phy}
	m =\Big(A^* P A \Big)^{-1} A^* P g_\delta, \qquad P:= (\cI-\Delta)^{s/2},
\end{equation}
where the operator $(\cI-\Delta)^{s/2}$ is defined through the relation 
\[
	(\cI-\Delta)^{s/2}m:=\cF^{-1} \Big(\aver{\bxi}^s\wh m\Big),
\]
$\cF^{-1}$ being the inverse Fourier transform, $\Delta$ being the usual Laplacian operator, and $\cI$ being the identity operator.

\paragraph{Key observations.} Let us first remark that the calculations above can be carried out in the same way if the $\cH^s$ norm is replaced with the $\dot\cH^s$ norm. The only changes are that $\aver{\bxi}$ should be replaced with $|\bxi|$ and the operator $\cI-\Delta$ in $P$ has to be replaced with $-\Delta$.

When $s=0$, assuming that $\alpha+\beta\ge 0$, the above solution reduces to the classical $L^2$ least-squares solution $m=(A^*A)^{-1}A^*g_\delta$. Moreover, when $A$ is invertible (so will be $A^*$), the solution can be simplified to $m=A^{-1} g_\delta$, using the fact that $\big(A^* P  A \big)^{-1}=A^{-1} P^{-1} A^{-*}$, which is simply the true solution to the original problem~\eqref{EQ:Lin IP}. Therefore, in the same manner, as the classical $L^2$ least-squares method, the least-squares method based on the $\cH^s$ norm does not change the solution to the original inverse problem when it is uniquely solvable. This is, however, not the case for the $\dot\cH^s$ inversion in general. For instance, $\dot\cH^1$ inversion only matches the derivative of the predicted data to the measured data.

When $s>0$, $P$ is a differential operator. Applying  $P$ to the datum $g_\delta$ amplifies high-frequency components of the datum.  When $s<0$,  $P$ is a (smoothing) integral operator. Applying  $P$ to the datum $g_\delta$ suppresses high-frequency components of the datum. Even though the presence of the operator $P$ in $\Big(A^* P A \Big)^{-1}$ will un-do the effect of  $P$ on the datum in a perfect world (when $A$ is invertible, and all calculations are done with arbitrary precision), when operated under a given accuracy, inversion with $s<0$ is less sensitive to high-frequency noise in the data while inversion with $s>0$ is more sensitive to high-frequency noise in the data, compared to the case of $s=0$ (that is, the classical $L^2$ least-squares inversion). Therefore, inversion with $s\neq 0$ can be seen as a ``preconditioned'' (by the operator $P$) $L^2$ least-squares inversion.

\subsection{Resolution analysis of linear inverse matching}

We now perform a simple resolution analysis, following the calculations in~\cite{BaRe-IP09}, on the $\cH^s$ inverse matching result for the linear model~\eqref{EQ:Lin IP}.
\begin{theorem}\label{THM:Error Analysis}
	Let $A$ be given as in~\eqref{EQ:IP Symbol} and $R_c$ an approximation to $A^{-1}$ defined through its symbol:
\begin{equation*}
  \wh R_c(\bxi) \sim \left\{
    \begin{array}{ll}
         \aver{\bxi}^\alpha, \qquad & |\bxi|<\xi_c \\
         0, & |\bxi|>\xi_c
    \end{array} \right..
\end{equation*}
Let $\delta=\|g_\delta-g\|_{\cH^s}$ be the $\cH^s$ norm of the additive noise in $g_\delta$. Then the reconstruction error $\|m-m_\delta^c\|_{L^2}$, with $m_\delta^c:=R_c g_\delta$ obtained as the minimizer of $\Phi(m)_{\cH^s}$, is bounded by
\begin{equation} \label{EQ:Error LIP}
  \|m-m^c_\delta\|_{L^2} \lesssim \|m\|_{\cH^\beta}^{\frac{\alpha-s}{\alpha+\beta-s}} \delta^{\frac\beta{\alpha+\beta-s}}.
\end{equation}
This optimal bound is achieve when we select
\begin{equation}\label{EQ:xic}
	\aver{\xi_c}^{-1} \sim (\delta\|m\|_{\cH^\beta}^{-1})^{\frac{1}{\alpha+\beta-s}}.
\end{equation}
\end{theorem}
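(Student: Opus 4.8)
The plan is the classical approximation-versus-noise-propagation split used for spectrally regularized inversions. The starting point is the observation, implicit in \eqref{EQ:Hs Inversion Phy}, that because $A$ and $P=(\cI-\Delta)^{s/2}$ are both Fourier multipliers the exact $\cH^s$-normal-equation solution operator $(A^*PA)^{-1}A^*P$ has symbol $\aver{\bxi}^{\alpha}$, i.e.\ it coincides with $A^{-1}$; the operator $R_c$ is simply this multiplier truncated to $|\bxi|<\xi_c$, so $m^c_\delta=R_cg_\delta$ is the spectrally cut-off $\cH^s$ reconstruction. Writing $g=Am$ (legitimate since $g_\delta$, hence $g$, lies in the range of $A$, and $m\in\cH^\beta$ a priori) and inserting $\pm R_cg=\pm R_cAm$, the triangle inequality gives
\begin{equation*}
	\|m-m^c_\delta\|_{L^2}\ \le\ \underbrace{\|m-R_cAm\|_{L^2}}_{\text{approximation error}}\ +\ \underbrace{\|R_c(g-g_\delta)\|_{L^2}}_{\text{propagated noise}},
\end{equation*}
and I would estimate the two pieces separately with Plancherel's identity.

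For the approximation error, $\wh R_c(\bxi)\wh A(\bxi)\sim 1$ on $|\bxi|<\xi_c$ and vanishes outside, so $\widehat{R_cAm}$ agrees with $\wh m$ on the low-frequency ball and is zero off it; hence $\|m-R_cAm\|_{L^2}^2=\int_{|\bxi|\ge\xi_c}|\wh m(\bxi)|^2\,d\bxi$, and multiplying and dividing by $\aver{\bxi}^{2\beta}$ and pulling out $\sup_{|\bxi|\ge\xi_c}\aver{\bxi}^{-2\beta}=\aver{\xi_c}^{-2\beta}$ yields $\|m-R_cAm\|_{L^2}\lesssim\aver{\xi_c}^{-\beta}\|m\|_{\cH^\beta}$. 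For the propagated noise, $\|R_c(g-g_\delta)\|_{L^2}^2=\int_{|\bxi|<\xi_c}\aver{\bxi}^{2\alpha}|\wh g(\bxi)-\wh{g_\delta}(\bxi)|^2\,d\bxi$; multiplying and dividing by $\aver{\bxi}^{2s}$, bounding $\aver{\bxi}^{2(\alpha-s)}\le\aver{\xi_c}^{2(\alpha-s)}$ on the ball, and recognizing the remaining integral as $\|g-g_\delta\|_{\cH^s}^2=\delta^2$ gives $\|R_c(g-g_\delta)\|_{L^2}\lesssim\aver{\xi_c}^{\alpha-s}\delta$.

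Adding the two estimates produces $\|m-m^c_\delta\|_{L^2}\lesssim\aver{\xi_c}^{-\beta}\|m\|_{\cH^\beta}+\aver{\xi_c}^{\alpha-s}\delta$. Since the first summand decreases and the second increases in $\xi_c$, the minimizer is obtained up to a constant by balancing them, i.e.\ $\aver{\xi_c}^{-\beta}\|m\|_{\cH^\beta}=\aver{\xi_c}^{\alpha-s}\delta$, which solves to \eqref{EQ:xic}; substituting this value back into either summand and simplifying the exponents gives exactly \eqref{EQ:Error LIP}. The same computation with $\aver{\bxi}$ replaced by $|\bxi|$ handles the $\dot\cH^s$ variant, as noted in the ``Key observations'' paragraph.

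There is no serious obstacle; the only points that need care are (i) the implied constants in the symbol relations $\wh A\sim\aver{\bxi}^{-\alpha}$, $\wh R_c\sim\aver{\bxi}^{\alpha}$ — the product $\wh R_c\wh A$ need only be bounded above and below on $|\bxi|<\xi_c$ for the argument to close, with the ratios absorbed into $\lesssim$; and (ii) the step $\aver{\bxi}^{2(\alpha-s)}\le\aver{\xi_c}^{2(\alpha-s)}$ in the noise bound, which uses $s\le\alpha$ — in the remaining range $\alpha<s\le\alpha+\beta$ the propagated noise is simply $\lesssim\delta$ with no gain from truncation and the stated formula degenerates, so the interesting case is $s\le\alpha$.
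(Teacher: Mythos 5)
Your proof is correct and takes essentially the same route as the paper's: the identical decomposition of $\|m-m^c_\delta\|_{L^2}$ into the approximation error $\|(\cI-R_cA)m\|_{L^2}$ plus the propagated noise $\|R_c(g-g_\delta)\|_{L^2}$, the same two operator-norm estimates $\aver{\xi_c}^{-\beta}$ and $\aver{\xi_c}^{\alpha-s}$ (which you derive explicitly via Plancherel where the paper merely asserts them), and the same balancing of the two terms to obtain \eqref{EQ:xic} and \eqref{EQ:Error LIP}. Your closing caveat that the noise estimate $\sup_{|\bxi|<\xi_c}\aver{\bxi}^{\alpha-s}\sim\aver{\xi_c}^{\alpha-s}$ requires $s\le\alpha$ is a legitimate refinement the paper passes over silently, but it is harmless in the regime of interest ($s\le 0\le\alpha$).
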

\begin{proof}
Following classical results in~\cite{EnHaNe-Book96}, it is straightforward to verify that the $L^2$ difference between the true solution $m$ and the approximated noisy solution $m^c_\delta$ is
\begin{multline}\label{EQ:Bound}
  \|m-m^c_\delta\|_{L^2} =\|m-R_c g_\delta\|_{L^2} =\|m-R_cg+R_cg-R_c g_\delta\|_{L^2} \\ = \|(\cI-R_cA)m+R_c(g-g_\delta)\|_{L^2} \le \|(\cI-R_cA)m\|_{L^2}+\|R_c(g-g_\delta)\|_{L^2}.
\end{multline}
We then verify that operators $\cI-R_cA: \cH^\beta(\bbR^d) \mapsto L^2(\bbR^d)$ and $R_c: \cH^s(\bbR^d)\mapsto L^2(\bbR^d)$ have the following norms respectively
\[
	\|R_c\|_{\cH^s\mapsto L^2}\sim \aver{\xi_c}^{\alpha-s} \qquad \mbox{and}\qquad \|(\cI-R_c A)\|_{\cH^\beta \mapsto L^2}\sim \aver{\xi_c}^{-\beta}.
\]
This allows us to conclude that
\begin{multline*}\label{EQ:Bound}
  \|m-m^c_\delta\|_{L^2} \leq \|R_c\|_{\cH^s\mapsto L^2} \delta + \|(\cI-R_c A)\|_{\cH^\beta \mapsto L^2}\|m\|_{\cH^\beta} 
  \lesssim \aver{\xi_c}^{\alpha-s} \delta + \aver{\xi_c}^{-\beta}\|m\|_{\cH^\beta}.
\end{multline*}
We can now select $\aver{\xi_c}\sim (\delta^{-1}\|m\|_{\cH^\beta})^{\frac{1}{\alpha+\beta-s}}$, i.e. the relation given in~\eqref{EQ:xic}, to minimize the error of the reconstruction, which gives the bound in~\eqref{EQ:Error LIP}.
\end{proof}

\paragraph{Optimal resolution.} One main message carried by the theorem above is that reconstruction based on the $\cH^s$ mismatch has a spatial resolution
\[
	\eps:=\aver{\xi_c}^{-1}\sim\delta^{\frac{1}{\alpha+\beta-s}},
\]
under the conditions in the theorem. At a fixed noise level $\delta$, for fixed $\alpha$ and $\beta$, the optimal resolution of the inverse matching result degenerates when $s$ gets smaller. The case of $s=0$ corresponds to the usual reconstruction in the $L^2$ framework. The optimal resolution one could get in this case is decided by $\delta^{\frac{1}{\alpha+\beta}}$. When $0<s$ ($\le \alpha+\beta$), the best resolution one could get is better than the $L^2$ case in a perfect world.  When $s<0$, the reconstructions in the $\cH^s$ framework provides an optimal resolution that is worse than the $L^2$ case. In other words, the reconstructions based on the negative norms appear to be smoother than optimal $L^2$ reconstructions in this case. See Section~\ref{SEC:Num} for numerical examples that illustrate this resolution analysis.

However, we should emphasize that the above simple calculation only provides the best-case scenarios. It does not tell us exactly how to achieve the best results in a general setup (when the symbol of $A$, i.e., the singular value decomposition of $A$ in the discrete case, is not precisely known). Nevertheless, the guiding principle of the analysis is well demonstrated: least-squares with a stronger (than the $L^2$) norm yield higher resolution reconstructions while least-squares with a weaker norm yield lower (again compared to the $L^2$ case) resolution reconstructions in the best case.


\subsection{$\cH_{(d\mu)}^{s}$-based inverse matching for linear problems}

Inverse matching with the weighted $\cH^s$ norm can be analyzed in the same manner to study the impact of the weight on the inverse matching result. The solution $m$ to~\eqref{EQ:Lin IP} in this case is sought as the minimizer of the functional $\Phi_{\cH^s_{(d\mu)}}(m)$ defined in~\eqref{EQ:Weighted Hs Obj} with $f(m)=Am$ and $g=g_\delta$. This means that the weight $\omega=1/g_\delta$ in our definition of the objective function.

Following the same calculation as in the previous subsection, we find that the minimizer of the functional $\Phi_{\cH^s_{(d\mu)}}(m)$ solves the following normal equation at frequency $\bxi$:
\begin{equation}\label{EQ:NE Hs Weighted}
	\wh B^* \wh{\omega}\circledast \big(\aver{\bxi}^{s} \wh A\wh m\big) =\wh B^* \wh{\omega}\circledast \big(\aver{\bxi}^{s} \wh{g_\delta} \big)
\end{equation}
where $\wh B^*$ is the $L^2$ adjoint of the operator $\wh B$ defined through the relation $\wh B \hat m: = \wh{\omega}\circledast \big(\aver{\bxi}^{s} \wh A \wh m \big)$. 

We first observe that the right-hand side of ~\eqref{EQ:NE Hs Weighted} and that of ~\eqref{EQ:NE Hs} are different. In~\eqref{EQ:NE Hs}, the $\bxi$-th Fourier mode of the datum $g_\delta$ is amplified or attenuated, depending on the sign of $s$, by a factor of $\aver{\bxi}^{2s}$. While in~\eqref{EQ:NE Hs Weighted}, this mode is further convoluted with other modes of the datum after the amplification/attenuation. The convolution induces mixing between different modes of the datum. Therefore, inverse matching with the weighted norm cannot be done mode by mode as what we did for the unweighted norm, even when we assume that the forward operator $A$ is diagonal. However, main effect of the norm on the inversion, the smoothing/sharpening effect introduced by the $\aver{\bxi}^{2s}$ factor (half of which come from the factor  $\aver{\bxi}^{s}$ in front of $\wh{g_\delta}$ while the other half come from the factor  $\aver{\bxi}^{s}$ in $\wh B$), are the same in both the unweighted $\cH^s$ and the weighted $\cH_{(d\mu)}^s$ norms.

The inverse matching solution, in this case, written in physical space, is:
\begin{equation}\label{EQ:Hs Weighted Inversion Phy}
	m =\Big(A^* P_g  A \Big)^{-1} A^* P_g g_\delta, \qquad P_g := (\cI-\Delta)^{s/2} \omega  (\cI-\Delta)^{s/2}.
\end{equation}
We can again compare this with the unweighted solution in ~\eqref{EQ:Hs Inversion Phy}. The only difference is the introduction of the inhomogeneity, which depends on the datum $g_\delta$, in the preconditioning operator $P$ by replacing it with $P_g$. When $0<s$ ($\le \alpha+\beta$),  $P$ and $P_g$  are (local) differential operators. Roughly speaking, compared to $P$, $P_g$ emphasizes places where $g_\delta$ is small, be reminded that $\omega=1/g_\delta$, or the $s$-th order derivative of $g_\delta$ is large. At those locations, $P_g$ amplifies the same modes of the datum $g_\delta$ more than $P$ does. When $s<0$, $P$ and $P_g$ are non-local operators. The impact of $g_\delta$ is more global (as we have seen in the previous paragraph in the Fourier domain). It is hard to precisely characterize the impact of $g_\delta$ without knowing its form explicitly. However, we can still see, for instance, from~\eqref{EQ:NE Hs Weighted}, that the smoother $g_\delta$ is, the smoother the inverse matching result will be (since $\wh{g_\delta}$ has fast decay and the convolution will further smooth out $\aver{\bxi}^s g_\delta$). If $g_\delta$ is very rough, say that it behaves like Gaussian noise, then $\wh{g_\delta}$ decays very slowly. The convolution, in this case, will not smooth out $\aver{\bxi}^s g_\delta$ as much as in the previous case. The main effect of $\cH_{(d\mu)}^s$ on the inverse matching result in this case mainly comes from the norm, not the weight.

\begin{remark}
Thanks to the asymptotic equivalence between $\dot\cH_{(d\mu)}^{-1}$ and $W_2$ in~\eqref{EQ:W2-Hm1 Asym}, the smoothing effect we observe in this section for the $\cH_{(d\mu)}^{-1}$ inverse matching (and therefore $\dot\cH_{(d\mu)}^{-1}$ inverse matching since $\dot\cH_{(d\mu)}^{-1}$ is only different from $\cH_{(d\mu)}^{-1}$ on the zeroth moment in the Fourier domain) is also what we observe in the $W_2$ inverse matching. This observation will be demonstrated in more detail in our numerical simulations in Section~\ref{SEC:Num}.
\end{remark}

\subsection{Iterative solution of nonlinear inverse matching}

The simple analysis in the previous sections based on the linearized quadratic Wasserstein metric, i.e., a weighted $\dot\cH^{-1}$ norm, on the inverse matching of linear model~\eqref{EQ:Lin IP} does not translate directly to the case of inverse matching with the fully nonlinear model~\eqref{EQ:Nonl IP}. Nevertheless, the analysis does provide us some insights.

Let us consider an iterative matching algorithm for the nonlinear problem, starting with a given initial guess $m_0$, characterized by the following iteration: 
\begin{equation}\label{EQ:Iteration}
	m_{k+1}=m_{k}+\ell_k\, \zeta_k,\ \ k\ge 0,
\end{equation}
where $\zeta_k$ is a chosen descent direction of the objective functional at iteration $k$, and $\ell_k$ is the step length at this iteration. For simplicity, let us take the steepest descent method where the descent direction is taken as the negative gradient direction. Following the calculations in the previous section, we verify that the Fr\'echet derivative of $\Phi_{\cH_{(d\mu)}^{s}}(m): \cH^\beta \mapsto \bbR_{\ge 0}$ at the current iteration $m_k$ in the direction $\delta m$ is given by
\begin{equation}\label{EQ:Hs Weighted Frechet}
	\Phi'_{\cH^s_{(d\mu)}}(m_k)[\delta m] =\int_{\bbR^d} \wh{\omega}\circledast \big[\aver{\bxi}^{s} (\wh{f(m_k)}-\wh g_\delta)\big] \wh{\omega}\circledast \aver{\bxi}^{s} \wh{f'(m_k)[\delta m]}  d\bxi,
\end{equation}
assuming that the forward model $f: \cH^\beta \mapsto \cH_{(d\mu)}^s$ is Fr\'echet differentiable at $m_k$ with derivative $f'(m_k)[\delta m]$. This leads to the following descent direction $\zeta_k$ chosen by a gradient descent method: 
\begin{equation}\label{EQ:Hs Weighted Gradient}
	\zeta_k =-\Big(f'(m_k)^* P_g  f'(m_k) \Big)^{-1} f'(m_k)^*P_g (f(m_k)-g_\delta),
\end{equation}
Let us compare this with the descent direction resulted from the $L^2$ least-squares functional:
\begin{equation}\label{EQ:L2 Gradient}
	\zeta_k=\Big(f'(m_k)^* f'(m_k) \Big)^{-1} f'(m_k)^* (f(m_k)-g).
\end{equation}
We see that the iterative process of the $\cH_{(d\mu)}^{s}$ inverse matching can be viewed as a preconditioned version of the corresponding $L^2$ iteration. The preconditioning operator, $P_g$, depends on the datum $g_\delta$ but is independent of the iteration. When the iteration is stopped after a finite step, the effect we observed for linear problems, that is, the smoothing effect of $P_g$ in the case of $s<0$ or its de-smoothing effect in the case of $s>0$,  is carried to the solution of nonlinear problems.

\paragraph{Wasserstein smoothing in the asymptotic regime.} To summarize, when the model predictions and the measured data are sufficiently close to each other, inverse matching with the quadratic Wasserstein metric, or equivalently the $\dot\cH_{(d\mu)}^{-1}$ metric, can be viewed as a preconditioned $L^2$-based inverse matching. The preconditioning operator is roughly the inverse Laplacian operator with a coefficient given by the datum. The optimal resolution of the inversion result from the Wasserstein metric, with data at a given noise level $\delta$ is roughly of the order $\delta^{\frac{1}{\alpha+\beta+1}}$ ($\alpha$ being the order of the operator $f'(m)$ at the optimal solution and $m\in\cH^\beta$) instead of $\delta^{\frac{1}{\alpha+\beta}}$ as given in the $L^2$ case. The shape of the datum $g_\delta$ distorts the Wasserstein matching result slightly from the inverse matching result with a regular $\dot\cH^{-1}$ (semi-) norm.



\begin{remark}\label{RMK:Diagonal}
The assumption that the linear operator $A$ is diagonal in the Fourier domain, given in ~\eqref{EQ:IP Symbol}, simplifies the calculations in this section. The assumption is not necessary at all to show the preconditioning effect of the $W_2$ metric. Without this assumption, we need to replace all the multiplication of $\wh A$ in the Fourier domain with convolutions. The final results remain the same. The assumption is indeed necessary in order to write down the approximate inverse operator $R_c$ in the Fourier domain explicitly. This leads to a precise resolution characterization in Theorem~\ref{THM:Error Analysis} for this reconstruction operator.
\end{remark}
\section{Wasserstein iterations in non-asymptotic regime}
\label{SEC:W2 Iterations}

As we have seen from the previous sections, in the asymptotic regime, the smoothing effect of the quadratic Wasserstein metric in solving inverse matching problems can be characterized relatively precise thanks to the equivalence between $W_2$ and $\dot\cH_{(d\mu)}^{-1}$ given in~\eqref{EQ:W2-Hm1 Asym}. The demonstrated smoothing effect makes $W_2$-based inverse matching very robust against high-frequency noise in the measured data. This phenomenon has been reported in the numerical results published in recent years~\cite{BrOpVi-Geophysics10,EnFr-CMS14,EnFrYa-CMS16,YaEn-Geophysics18,YaEnSuFr-Geophysics18} and is one of the main reasons that $W_2$ is considered as a good alternative for $L^2$-based matching methods. In this section, we argue that the smoothing effect of $W_2$ can also be observed in the non-asymptotic regime, that is, a regime where signals $f$ and $g$ are sufficiently far away from each other. \emph{The smoothing effect in the non-asymptotic regime implies that the landscape of the $W_2$ objective functional is smoother than that of the classical $L^2$ objective functional.}

To see the smoothing effect of $W_2$ in non-asymptotic regime, we analyze the inverse matching procedure described by the iterative scheme~\eqref{EQ:Iteration} for the objective functional $\Phi_{W_2}(m)$, defined in~\eqref{EQ:W2 Obj}. For the sake of being technically correct, we assume that the data we are comparing in this section are sufficiently regular. More precisely, we assume that $f\in \cC^{0,\alpha}(\bbR^d)$ and $g\in \cC^{0,\alpha}(\bbR^d)$ for some $\alpha>0$. We also assume that the map $m \mapsto f$ ($\cH^\beta\to \cC^{0,\alpha}$) is Fr\'echet differentiable at any admissible $m$ and denote by $f'(m)[\delta m]$ the derivative in direction $\delta m$. We can then write down the variation of $\Phi_{W_2}(m):\ \cH^\beta \mapsto \bbR_{\ge 0}$ at the current iteration $m_k$ in the direction $\delta m$, following the differentiability result of $W_2^2(f,g)$ with respect to $f$ along mass preserving perturbations~\cite[Theorem 8.13]{Villani-Book03}. It is, 
\begin{multline}\label{EQ:W2 Gradient}
	\Phi_{W_2}'(m_k)[\delta m] =\int_{\bbR^d} \Big(\dfrac{|\bx-\bT_k(\bx)|^2}{2}f'(m_k)[\delta m] - (\bx-\bT_k(\bx))f(\bx) \cdot \bT_k'\big[f'(m_k)[\delta m]\big]\Big) d\bx,
\end{multline}
where $\bT_k$ denotes the optimal transport map at iteration $k$ (that is, for $m_k$), and $\bT_k'[\delta f]$ denotes the variation of $\bT_k$ with respect to $f$ (not $m$) in the direction $\delta f$. We emphasize again that $\delta m$ is selected such that $\int_{\bbR^d}f(m_k)d\bx=\int_{\bbR^d} f(m_k+\delta m) d\bx$ which is necessary since the space of probability densities with the $W_2$ metric is not a linear space.

Following the optimal transport theorem of Brenier~\cite{Villani-Book03}, the optimal transport map at the current iteration $k$, $\bT_k$, is given as $\bT_k(\bx): = \nabla u(\bx)$ where $u$ is the unique (up to a constant) convex solution to the Monge-Amp\`ere equation:
\begin{equation}\label{EQ:Monge-Ampere}
	\det(D^2 u(\bx)) = f(m_k(\bx))/g(\nabla u(\bx)),\ \ u\ \mbox{being convex}.
\end{equation}
Here $D^2$ is the Hessian operator defined through the Hessian matrix $D^2 u:=(\dfrac{\partial^2 u}{\partial x_i \partial x_j})$ (with the notation $\bx=(x_1, \cdots, x_d)$). Interested readers are referred to~\cite{Villani-Book03} and the references therein for more detailed mathematical study on the existence and uniqueness of solutions to the Monge-Amp\`ere equation~\eqref{EQ:Monge-Ampere}.

Let $\varphi:=u'(f(m_k))[\delta f]$ be the Fr\'echet derivative of $u$ at $f(m_k)$ in the direction $\delta f$, we then verify that $\varphi$ solves the following second-order elliptic equation to the leading order:
\begin{equation}\label{EQ:Monge-Ampere Lin}
	\sum_{ij}a_{ij}\dfrac{\partial^2 \varphi}{\partial x_i \partial x_j}  + \sum_{j} b_j\dfrac{\partial \varphi}{\partial x_j}  = \delta f,
\end{equation}
where $b_j=\det(D^2 u) \partial_{x_j} g(\bT_k(\bx))$ while $a_{ij}$ depend on the dimension. When $d=2$, $a_{ij}=-g(\bT_k(\bx))\dfrac{\partial^2 u}{\partial x_j \partial x_i}$ ($i\neq j$) and $a_{ii}=g(\bT_k(\bx))\dfrac{\partial^2 u}{\partial x_j \partial x_j}$ ($i\neq j$). When $d=3$, we have
\[
	a_{ij}=g(\bT_k(\bx))\left\{
	\begin{array}{rl}
		\dfrac{\partial^2 u}{\partial x_k \partial x_i}\dfrac{\partial^2 u}{\partial x_j \partial x_k}-\dfrac{\partial^2 u}{\partial x_j \partial x_i}\dfrac{\partial^2 u}{\partial x_k \partial x_k}, &  i\neq j \neq k,\\
		\dfrac{\partial^2 u}{\partial x_{k'} \partial x_{k'}}\dfrac{\partial^2 u}{\partial x_k \partial x_k}-\dfrac{\partial^2 u}{\partial x_{k'} \partial x_k}\dfrac{\partial^2 u}{\partial x_k \partial x_{k'}}, &  i=j\neq k \neq k'.
	\end{array} \right.
\]

Let $\psi$ be the solution to the (adjoint) equation:
\begin{equation}\label{EQ:Monge-Ampere Adj}
	\sum_{ij}a_{ij} \dfrac{\partial^2 \psi}{\partial x_i \partial x_j} - \sum_{j} b_j\dfrac{\partial \psi}{\partial x_j}  = -\nabla\cdot \Big(\big(\bx-\bT_k(\bx)\big)f(\bx)\Big).
\end{equation}
It is then straightforward to verify, following standard adjoint state calculations~\cite{Vogel-Book02}, that update direction can be written as
\begin{equation}\label{EQ:Direction}
	\zeta_k(\bx) = f'^{*}(m_k)\Big[\dfrac{|\bx-\bT_k(\bx)|^2}{2} + \psi(\bx) \Big],
\end{equation}
where $f'^*(m_k)$ denotes the $L^2$ adjoint of the operator $f'(m_k)$. 

We first observe that unlike in the classical $L^2$ case where $f'^*(m_k)$ is applied directly to the residual $f(m_k)-g$, that is, $\zeta_k(\bx)= f'^{*}(m_k)\Big[f(m_k)-g \Big]$, the descent direction here depends on the model prediction $f(m_k)$ and the datum $g$ only implicitly through the transfer map $\bI-\bT_k$ and its variation with respect to $m$. Only in the asymptotic regime of $g$ being very close to $f$ can we make the connection between $\dfrac{|\bx-\bT_k(\bx)|^2}{2} + \psi(\bx)$ and the normalized residual. This is where the $\dot\cH_{(d\mu)}^{-1}$ approximation to $W_2$ comes from. 

From Caffarelli's regularity theory (c.g. ~\cite[Theorem 4.14]{Villani-Book03}), which states that when $f\in \cC^{0,\alpha}(\bbR^d)$ and $g\in \cC^{0,\alpha}(\bbR^d)$ we have that the Monge-Amp\`ere solution $u\in\cC^{2,\alpha}(\bbR^d)$, we see that $(\bx-\bT_k(\bx))$ is at least $\cC^{1,\alpha}$. Therefore the solution to the adjoint problem, $\psi$, is in $\cC^{2,\alpha}$ by standard theory for elliptic partial differential equations when the problem is not degenerate and in $\cC^{1,\alpha}$ if it is degenerate. Therefore,  $\frac{|\bx-\bT_k(\bx)|^2}{2} + \psi(\bx)\in \cC^{1, \alpha}$ is one derivative smoother than $f$ and $g$ (and therefore the residual). This is exactly what the preconditioning operator $P$ (with $s=-1$) did to the residual in the asymptotic regime, for instance, as shown in~\eqref{EQ:Hs Inversion Phy}. This shows that $W_2$ inverse matching has smoothing effect even in the non-asymptotic regime.

In one-dimensional case, we can see the smoothing effect more explicitly since we are allowed to construct the optimal map explicitly in this case. Let $F$ and $G$ be the cumulative density functions for $f$ and $g$ respectively. The optimal transportation theorem in one-dimensional setting (c.g. ~\cite[Theorem 2.18]{Villani-Book03}) then says that the optimal transportation map from $f$ to $g$ is given by $T(x)=G^{-1}\circ F(x)$. This allows us to verify that, the gradient of $\Phi_{W_2}(m)$ at $m_k$ in direction $\delta m$, given in~\eqref{EQ:W2 Gradient}, is simplified to: 
\begin{multline}\label{EQ:W2 Gradient 1D}
	\Phi_{W_2}'(m_k)[\delta m] = \int_\bbR \Big(\dfrac{(x-T_k(x))^2}{2} + p_k(+\infty)-p_k(x)\Big)f'(m_k)[\delta m] dx \\
		= \int_\bbR f'^*(m_k)\Big[\dfrac{(x-T_k(x))^2}{2} - p_k(+\infty) + p_k(x)\Big] \delta m(x) dx
\end{multline}
where the function $p_k(x)$ is defined as $p_k(x)=\dint_{-\infty}^x \frac{(y-T_k(y))f(m_k(y))}{g(T_k(y))} dy$. Therefore the descent direction~\eqref{EQ:Direction} simplifies to
\begin{equation}\label{EQ:Direction 1D}
	\zeta_k(\bx) = f'^*(m_k)\Big[\dfrac{(x-T_k(x))^2}{2} - p_k(+\infty) + p_k(x)\Big].
\end{equation}

It is clear from~\eqref{EQ:Direction 1D} that the gradient of $\Phi_{W_2}(m)$ at iteration $k$ depends only on the anti-derivatives of $f(m_k)$, $g$ and $f(m_k)/g(T_k)$, through $T_k(x)$ and $p_k$. Therefore, it is smoother than the Fr\'echet derivative of $\Phi_{L^2}(m)$ in general, whether or not the signals $f(m_k)$ and $g$ are close to each other. This shows why the smoothing effect of $W_2$ exists also in non-asymptotic regime.
\begin{figure}[htb!]
	\centering
	\includegraphics[width=0.3\linewidth]{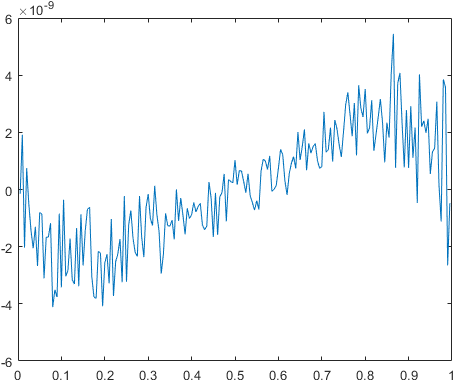}
	\includegraphics[width=0.3\linewidth]{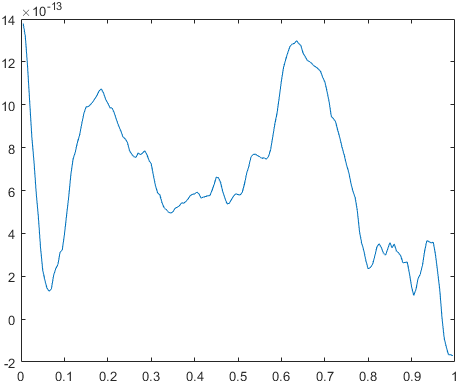}
	\includegraphics[width=0.3\linewidth]{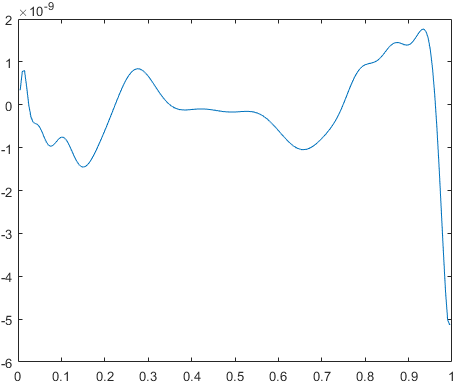}
	\caption{The gradients of the objective functions $\Phi_{L^2}(m)$  (left), $\Phi_{\cdot\cH^{-1}}(m)$ (middle) and $\Phi_{W_2}(m)$ (right) at the initial guess for the inverse diffusion problem in Section~\ref{SUBSEC:PAT} in the one-dimensional domain $\Omega=(0, 1)$.}
	\label{FIG:Frechet Derivative PAT 1D}
\end{figure}

\begin{figure}[htb!]
	\centering
	\includegraphics[width=0.3\linewidth]{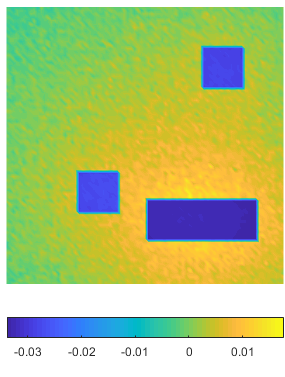}
	\includegraphics[width=0.3\linewidth]{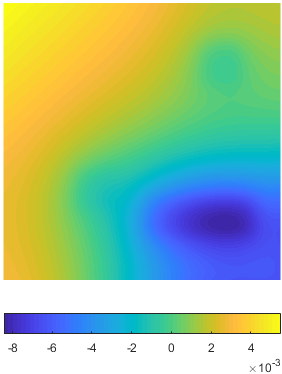}
	\includegraphics[width=0.3\linewidth]{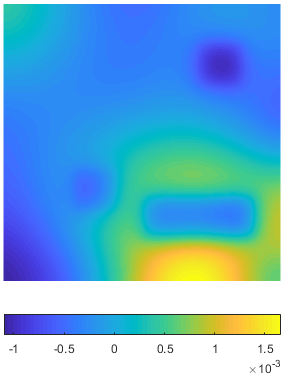}
	\caption{The gradients of the objective functions $\Phi_{L^2}(m)$  (left), $\Phi_{\cdot\cH^{-1}}(m)$ (middle) and $\Phi_{W_2}(m)$ (right) at the initial guess for the inverse diffusion problem in Section~\ref{SUBSEC:PAT} in the two-dimensional domain $\Omega=(0, 1)\times(0, 1)$.}
	\label{FIG:Frechet Derivative PAT 2D}
\end{figure}
To provide some numerical evidences, we show in Figure~\ref{FIG:Frechet Derivative PAT 1D} and Figure~\ref{FIG:Frechet Derivative PAT 2D} some gradients of the $L^2$ and $W_2$ objective functions, with respect to the absorption coefficient $\sigma$ (i.e. $m=\sigma$), for the inverse diffusion problem we study in Section~\ref{SUBSEC:PAT}, in one- and two-dimensional domains $\Omega=(0, 1)$ and $\Omega=(0, 1)\times(0, 1)$ respectively. The synthetic data, generated by applying the forward operator to the true absorption coefficient and then adding multiplicative random noise, contains roughly $5\%$ of random noise. We intentionally choose initial guesses to be relatively far away from the true coefficient so that the model prediction $f(m)$ is far from the data $g$ to be matched. We are not interested in a direct quantitative comparison between the gradient of the Wasserstein objective function and that of the $L^2$ objective function since we do not have a good basis for the comparison. However, it is evident from these numerical results that the gradient of the Wasserstein functional is smoother, or contains fewer frequencies to be more precise, compared to the corresponding $L^2$ case.

\section{Wasserstein inverse matching for transportation and convolution}
\label{SEC:W2 De-Conv}


Its robustness against high-frequency noise in data, resulted from its smoothing effect we demonstrated in the previous two sections, is not the only reason why $W_2$ is thought as better than $L^2$ for many inverse data matching problems. We show in this section another advantage of the $W_2$ distance in studying inverse matching problems: its convexity with respect to translation and dilation of signals.

\subsection{$W_2$ convexity with respect to affine transformations}
\label{SUBSEC:W2 Transp}

For a given probability density function $\phi$ with finite moments $\bM_1:=\int_{\bbR^d} \bx \phi(\bx) d\bx$ and $\mathbb M_2:=\int_{\bbR^d}|\bx|^2 \phi(\bx) d\bx$, we define:
\begin{equation}\label{EQ:Affine Transform}
	f(m(\bx)):= \dfrac{1}{\sqrt{|\Sigma|}}\phi\big(\Sigma^{-\frac12}(\bx-\lambda \bar\bx)\big).
\end{equation}
where $m:=(\Sigma, \lambda, \bar\bx)$ with $\Sigma\in\bbR^{d\times d}$ symmetric and positive-definite, $\lambda\in\bbR$ and $\bar\bx\in\bbR^d$. This $f$ is simply a translation (by $\lambda\bar\bx$) and dilation (by $\Sigma^{1/2}$) of the function $\phi$. We verify that $\dint_{\bbR^d} f(m(\bx)) d\bx=\dint_{\bbR^d}\phi(\bx) d\bx$. 

Let $g=f(m_g)$ be generated from $m_g:=(\Sigma_g, \lambda_g, \bar\bx_g)$ with $\Sigma_g\in\bbR^{d\times d}$ symmetric and positive-definite. Then we check that the optimal transport map from $f$ to $g$ is given by $\bT(\bx)=\Sigma_g^{1/2}\Sigma^{-1/2}(\bx-\lambda\bar\bx)+\lambda_g\bar\bx_g$. In other words, the function $u(\bx)=\dfrac{1}{2}(\bx-\bar\bx)^t \Sigma_g^{-1/2}\Sigma^{1/2} (\bx-\bar\bx) +\bar\bx_g\cdot \bx$ is a convex solution to the Monge-Amp\`ere equation~\eqref{EQ:Monge-Ampere} with this $(f, g)$ pair. This observation allows us to find that,
\begin{equation}\label{EQ:W2 Affine Transform}
	W_2^2(f, g)=|\lambda\bar\bx-\lambda_g\bar\bx_g|^2+2(\lambda\bar\bx-\lambda_g\bar\bx_g)^t(\Sigma^{1/2}-\Sigma_g^{1/2})\bM_1 + \int_{\bbR^d}|(\Sigma^{1/2}-\Sigma_g^{1/2})\bx|^2 \phi(\bx) d\bx.
\end{equation}
This calculation shows that $W_2^2(f, g)$ is convex with respect to $\lambda\bar\bx-\lambda_g\bar\bx_g$ and $\Sigma^{1/2}-\Sigma_g^{1/2}$ for rather general probability density function $\phi$. 

\begin{figure}[htb!]
	\centering
	\includegraphics[width=0.3\linewidth]{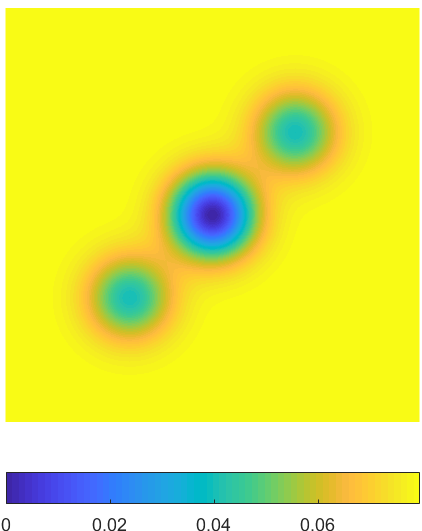}
	\includegraphics[width=0.3\linewidth]{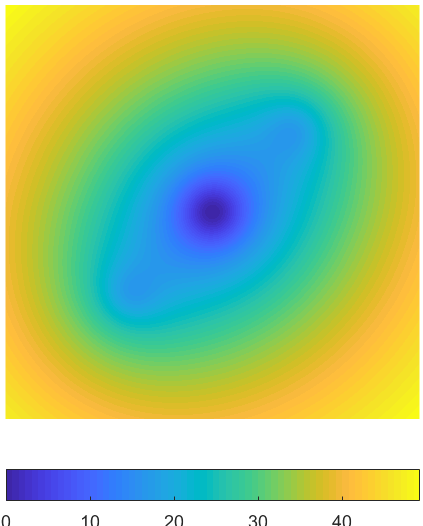}
	\includegraphics[width=0.3\linewidth]{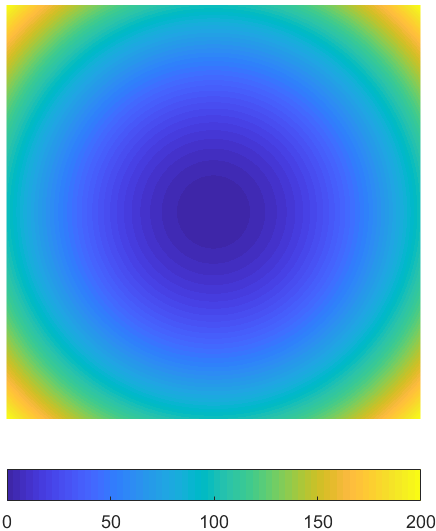}\\
	\includegraphics[width=0.3\linewidth]{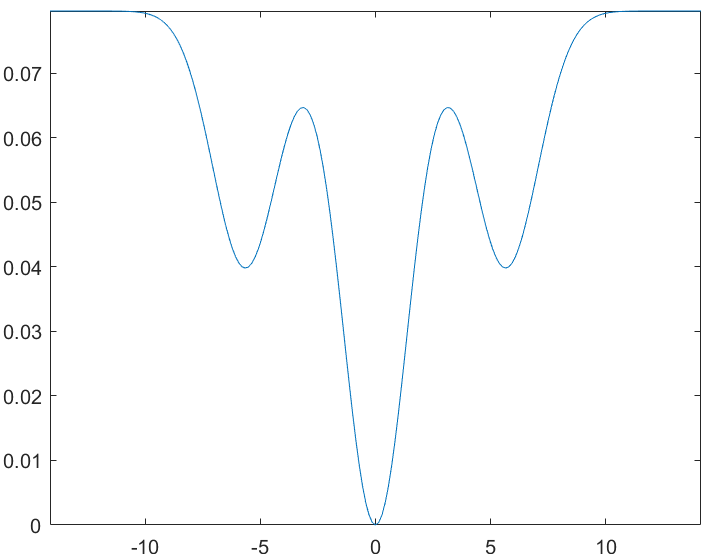}
	\includegraphics[width=0.3\linewidth]{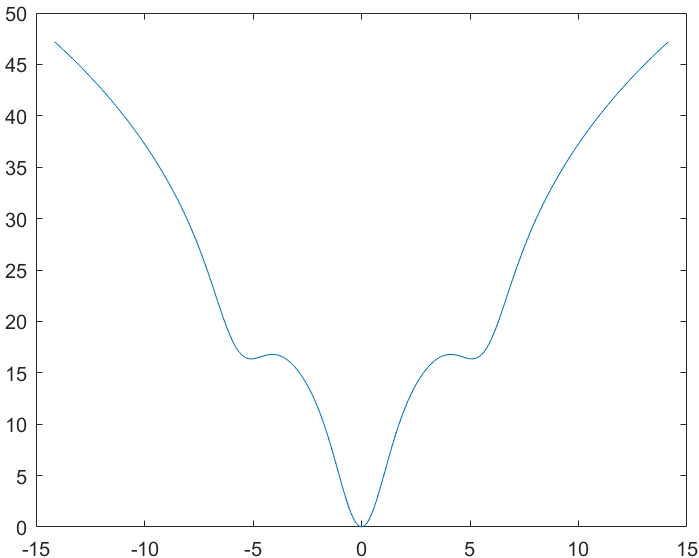}
	\includegraphics[width=0.3\linewidth]{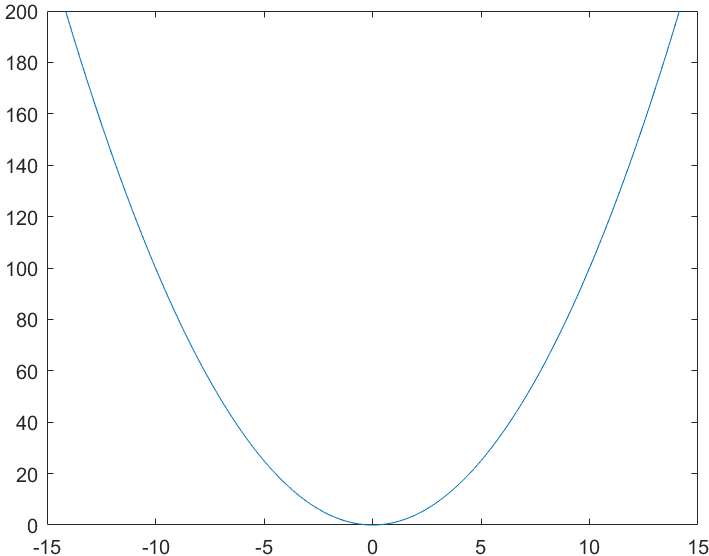}
	\caption{Top row: plots of the $\|f-g\|_{L^2}^2$ (left), $\|f-g\|_{\dot\cH^{-1}}^2$ (middle) and $W_2^2(f, g)$ (right) for the $\phi(\bx)$ given in~\eqref{EQ:phi} in the two-dimensional region $\lambda \bar\bx-\lambda_g\bar\bx_g\in [-10, 10]^2$ with $\Sigma=\Sigma_g=\bI_2$. Bottom row: the corresponding cross-sections along the left bottom to top right diagonal.}
	\label{FIG:Convexity}
\end{figure}
For the purpose of comparison, we recall that the $L^2$ distance between $f$ and $g$ in this case is given by
\begin{multline}\label{EQ:L2 Transl-Dil}
	\|f-g\|_{L^2}^2=(\frac{1}{|\Sigma^{1/2}|}+\frac{1}{|\Sigma_g^{1/2}|})\int_{\bbR^d}\phi^2(\by) d\by \\ 
	-\dfrac{2}{|\Sigma_g^{1/2}|}\int_{\bbR^d} \phi(\by)\phi\big(\Sigma_g^{-1/2}\Sigma^{1/2}\by+\Sigma_g^{-1/2}(\lambda\bar\bx-\lambda_g\bar\bx_g)\big) d\by,
\end{multline}
and the $\dot \cH^{-1}$ distance between $f$ and $g$ in this case is given by
\begin{equation}\label{EQ:dHm1 Transl-Dil}
	\|f-g\|_{\dot\cH^{-1}}^2=\int_{\bbR^d}|\bxi|^{-2} \Big(|\wh \phi(\Sigma^{1/2}\bxi)|^2 +|\wh \phi(\Sigma_g^{1/2}\bxi)|^2 -2\Re\big[\wh\phi(\Sigma^{1/2}\bxi)\overline{\wh\phi}(\Sigma^{1/2}_g\bxi)e^{i(\lambda\bar\bx-\lambda_g\bar\bx_g)\cdot\bxi}\big]  \Big) d\bxi.
\end{equation}
By simple Taylor expansions, we see that $\|f-g\|_{L^2}^2$ and $\|f-g\|_{\dot\cH^{-1}}^2$ are indeed convex when $\Sigma^{1/2}-\Sigma_g^{1/2}$ and $\lambda\bar\bx -\lambda_g\bar \bx_g$ are sufficiently small. However, neither $\|f-g\|_{L^2}^2$ nor $\|f-g\|_{\dot\cH^{-1}}^2$ is globally convex with respect to $\Sigma^{1/2}-\Sigma_g^{1/2}$ or $\lambda\bar\bx -\lambda_g\bar \bx_g$ for a general probability density $\phi(\bx)$. This is one of the major differences between $W_2$, $L^2$ and $\dot\cH^{-1}$. To visualize this advantage of $W_2$, we plot in Figure~\ref{FIG:Convexity} the functionals $W_2^2(f, g)$, $\|f-g\|_{L^2}^2$ and $\|f-g\|_{\dot\cH^{-1}}^2$ in the two-dimensional region $\Omega=[-10, 10]^2$ with fixed $\Sigma=\Sigma_g=\bI_2$ for 
\begin{equation}\label{EQ:phi}
\phi(\bx)=\frac{1}{4\pi}\big[e^{-\frac{1}{2}|\bx-\by_1|^2}+e^{-\frac{1}{2} |\bx-\by_2|^2}\big], \qquad \by_1=(-2, -2),\ \ \by_2=(2, 2).
\end{equation}
The plots clearly demonstrate the advantage of $W_2$ over $L_2$ and $\dot\cH^{-1}$ in terms of its convexity property: while $W_2^2$ is globally convex, $L_2$ and $\dot\cH^{-1}$  are only convex locally. 

In fact, the case of $\phi(\bx)=(2\pi)^{-d/2}e^{-\frac{1}{2} |\bx|^2}$ is well-known in the statistics literature~\cite{DeDe-arXiv19}. In this case, $f$ and $g$ are Gaussian densities with mean-covariance $(\lambda\bar \bx, \Sigma)$ and $(\lambda_g\bar \bx_g, \Sigma_g)$ respectively. The $W_2^2$ distance between them is simplified to
\begin{equation}\label{EQ:W2 Gaussians}
	W_2^2(f, g)=|\lambda\bar\bx-\lambda_g\bar\bx_g|^2+{\rm Tr}(\Sigma+\Sigma_g-2(\Sigma^{1/2}\Sigma_g\Sigma^{1/2})^{1/2}).
\end{equation}
This shows that $W_2^2(f, g)$ is convex with respect to the difference of the mean and variance of the two Gaussian densities. This fact makes the quadratic Wasserstein metric extremely useful for inverse matching of Gaussian densities. 

\subsection{Inverse transport with $W_2$}

The simple calculations we just had can turn out to be very useful in solving some inverse matching problems.

{\bf Transport in homogeneous flow.} Let us consider the transport of a physical quantity $\phi$ in a given uniform flow $\bv$.  The evolution of the quantity is modeled by the following transport equation:
\begin{equation}\label{EQ:Transport}
	\dfrac{\partial\psi}{\partial t} +\bv\cdot \nabla \psi =0, \ \ \mbox{in}\ \ \bbR_+\times\bbR^d, \qquad \psi(0, \bx)=\phi(\bx),\ \ \mbox{in}\ \bbR^d.
\end{equation}
It is straightforward to check that the solution to this transport equation at time $t=\lambda$ is given as
\begin{equation}\label{EQ:Transl-Dil}
	\psi(\lambda, \bx) =\phi(\bx-\lambda \bv).
\end{equation}
For a given function $\phi$, we are interested in finding from the datum $g:= \phi(\bx-\lambda_g \bv_g)$ the unknown flow $\bv$ and the travel distance $\lambda$ by matching the predicted datum $f$ with $g$ under the $W_2$ metric. The result in~\eqref{EQ:W2 Affine Transform} then shows that $W_2^2(f, g)$ is convex with respect to $\lambda\bv$. More precisely,
\begin{equation}\label{EQ:W2 Transl-Dil}
	W_2^2(f, g)=|\lambda\bv-\lambda_g\bv_g|^2.
\end{equation}
Therefore the inverse matching problem of determining $\lambda\bv$ from given data is a convex problem under the $W_2$ metric.  

Note that since the dependence of $W_2(f, g)$ on $\lambda$ and $\bv$ is only through the product, we can generalize our nonlinear model~\eqref{EQ:Transl-Dil} by making the flow more complicated. One example of such generalization is to make the change
\[
	\lambda \bv \to \sum_{j=1}^J \lambda_j\bv_j, 
\]
for any $J$ given flow $\{\bv_j\}_{j=1}^J$. In this case, $W_2^2(f, g)$ is convex with respect to $\lambda_j\bv_j-\lambda_{g_j}\bv_{g_j}$ ($1\le j\le J$).

\medskip

{\bf Reconstruction from projections.} The convexity of $W_2^2$ with respect to translation of signals is also useful in recovering locations of objects in high-dimensional space from (possibly random and noisy) projections. Let $P_j\phi$ be the $j$-th projection of $\phi$ on a collection of $K_j$ coordinates $(x_{k_1},\cdots,x_{k_{K_j}})\in\bbR^{K_j}$, that is,
\[
	(P_j\phi)(x_{k_1},\cdots,x_{k_{K_j}}):=\int_{\bbR^{d-K_j}} \phi(\bx) dx_{k_{K_j}+1}\cdots dx_{k_d}.
\]
We assume that data from $J$ such projections are collected, and that each coordinate in $\bbR^d$ has been projected onto at least once in the $J$ projections (since otherwise the reconstruction will be nonunique). Let $f_j$ and $g_j$ be the $j$-th projections of $\phi(\bx-\bar\bx)$ and $\phi(\bx-\bar\bx_g)$ respectively. Then from~\eqref{EQ:W2 Affine Transform}, we see that the functional
\[
	\Phi(\bar\bx, \bar\bx_g)=\sum_{j=1}^J W_2^2(f_j, g_j) = \sum_{j=1}^J \Pi(j) (\bar x_j-\bar x_{gj})^2
\]
with $\Pi(j)$ the number of times that coordinate $x_j$ is included in the $J$ projections ($1\le \Pi(j)\le J$), is globally convex with respect to each $\bar x_j-\bar x_{gj}$. Therefore, locating an objects from measured projections is a convex problem in the $W_2$ framework.

\subsection{$W_2$-based deconvolution of localized sources}

Let us consider  the linear matching problem~\eqref{EQ:Lin IP} where $A$ is a linear convolution operator defined as:
\begin{equation}\label{EQ:Conv}
	f(m(\bx)) \equiv Am(\bx):=\int_{\bbR^d} K(\bx-\by) m(\by) d\by.
\end{equation}
This type of operators appear in many areas of applications, such as signal and image processing and optical imaging, where $K$ serves as the model of the point spread function of some given physical systems~\cite{BlHa-Book81,Dimri-Book92,GoWo-Book02,NaWo-JOSA10,TuLaSz-EMST01}. The measured signal $f$ can be viewed as output of the system for the input source $m$.

In many applications, $A$ is highly smoothing, meaning that its singular values decay very fast. Inverting $A$ to reconstruct all the information in $m$ is often impossible due to the noise presented in the data. Here we are interested in reconstructing highly localized sources, that is, sources that have their total mass supported in a small part of their supports. Point sources are such sources and are of great importance in many practical applications. In general, let $0<\eps\ll 1$ be given. We introduce
\[
	\cM_{\by,\eps}=\{ m(\bx)\ge 0\ |\  \exists B_\eps(\by) \subseteq \supp(m)\ {\rm s. t.}\ \int_{B_\eps(\by)} m(\bx) d\bx \ge (1-\eps^{d+1})\int_{\bbR^d} m(\bx) d\bx \}
\]
where $B_\eps(\by)$ denote the ball of radius $\eps$ centered at $\by$. Functions in $\cM_{\by, \eps}$ have their total mass concentrated in a ball of radius $\eps$, and are therefore highly localized.

It is straightforward to show the following result.
\begin{theorem}
	Let $f$ and $g$ be generated from~\eqref{EQ:Conv} with $m_f$ and $m_g$ respectively. Assume that $m_f$ and $m_g$ have the same total mass. (i) For any kernel function $K(\bx)$ with finite total mass, if 
		\[
		 	m_{\zeta}(\bx)=\delta(\bx-\bar\bx_\zeta),\ \ \zeta\in\{f, g\}
		\]
	then we have that
	\begin{equation}\label{EQ:W2 Conv Point Source}
	W_2^2(f, g)=|\bar\bx_f-\bar\bx_g|^2.
	\end{equation}
	(ii) For any kernel function $K(\bx)\in\cC^2(\bbR^d)$, if $m_\zeta\in \cC^2(\bbR^d)\cap \cM_{\bar\bx_\zeta,\, \eps}$, then
	\begin{equation}\label{EQ:W2 Conv}
		W_2^2(f, g)=|\bar\bx_f-\bar\bx_g|^2+\cO(\eps^{d+1}).
	\end{equation}
\end{theorem}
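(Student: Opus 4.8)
The plan is to exploit the key fact that convolution with a fixed mass-normalized kernel shifts the mean by a fixed amount and perturbs higher moments in a controlled way, then reduce the $W_2$ comparison to the Gaussian/affine calculation already available from~\eqref{EQ:W2 Affine Transform}. For part (i), I would observe that if $m_\zeta=\delta(\cdot-\bar\bx_\zeta)$ then $f(\bx)=K(\bx-\bar\bx_f)$ and $g(\bx)=K(\bx-\bar\bx_g)$ are pure translates of the \emph{same} density (after normalizing $K$ to have the common total mass). Since $W_2^2$ is invariant under simultaneously translating both arguments, and a translate-by-$\bc$ map applied to $f$ produces exactly $g$ when $\bc=\bar\bx_g-\bar\bx_f$, the displacement map $\bT(\bx)=\bx+(\bar\bx_g-\bar\bx_f)$ is the gradient of the convex function $u(\bx)=\tfrac12|\bx|^2+(\bar\bx_g-\bar\bx_f)\cdot\bx$, hence optimal by Brenier. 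Plugging into~\eqref{EQ:W2} gives $W_2^2(f,g)=\int |\bar\bx_g-\bar\bx_f|^2 f(\bx)\,d\bx=|\bar\bx_f-\bar\bx_g|^2$, which is~\eqref{EQ:W2 Conv Point Source}. This is essentially the special case $\Sigma=\Sigma_g=\bI$, $\lambda\bar\bx=\bar\bx_f$, $\lambda_g\bar\bx_g=\bar\bx_g$ of~\eqref{EQ:W2 Affine Transform}.

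For part (ii), I would first set $\by_\zeta$ to be the barycenter of $m_\zeta$ and note that the defining property of $\cM_{\bar\bx_\zeta,\eps}$ forces $\by_\zeta=\bar\bx_\zeta+\cO(\eps)$, and more importantly that all centered moments of $m_\zeta$ beyond the first are $\cO(\eps^{d+1}\cdot\mathrm{diam}^k)$ relative to the dominant point-mass contribution at $\bar\bx_\zeta$ — i.e., $m_\zeta$ is, in the $W_2$ sense, $\cO(\eps^{(d+1)/2})$-close to $M\,\delta(\cdot-\bar\bx_\zeta)$ where $M$ is the common mass. Concretely I would write $m_\zeta = m_\zeta^{\rm loc}+m_\zeta^{\rm tail}$ where $m_\zeta^{\rm loc}$ is the restriction to $B_\eps(\bar\bx_\zeta)$ (mass $\ge M(1-\eps^{d+1})$) and the tail has mass $\le M\eps^{d+1}$. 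Then $f=Am_f$ and $g=Am_g$ decompose correspondingly, and since $K\in\cC^2$ with finite mass, $A$ is continuous from measures to a suitable space; a triangle-inequality argument for $W_2$ (together with the fact that $W_2$ between two measures differing only in a region of mass $\cO(\eps^{d+1})$ is controlled by $\cO(\eps^{(d+1)/2})$ times the diameter of the supports) reduces $W_2^2(f,g)$ to $W_2^2\big(A(M\delta_{\bar\bx_f}),A(M\delta_{\bar\bx_g})\big)+\cO(\eps^{d+1})$, and the leading term equals $|\bar\bx_f-\bar\bx_g|^2$ by part (i). I would carry the error through in the squared quantity using the bound $|W_2^2(a,b)-W_2^2(a',b')|\le (W_2(a,b)+W_2(a',b'))\,(W_2(a,a')+W_2(b,b'))$ so that an $\cO(\eps^{(d+1)/2})$ perturbation of each argument produces an $\cO(\eps^{(d+1)/2})$ — and after squaring and using that $W_2(f,g)=|\bar\bx_f-\bar\bx_g|+\cO(\eps^{(d+1)/2})$ is bounded, an $\cO(\eps^{d+1})$ — correction, matching~\eqref{EQ:W2 Conv}.

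The main obstacle I anticipate is making the tail-perturbation estimate fully rigorous: I need to bound $W_2$ between $Am_\zeta$ and $Am_\zeta^{\rm loc}/\|m_\zeta^{\rm loc}\|$ (renormalized so masses match, which itself introduces an $\cO(\eps^{d+1})$ reweighting of $f$ that must be absorbed), and to control the supports — the definition of $\cM_{\by,\eps}$ only says most of the mass is in $B_\eps(\by)$, not that the support is bounded, so the $W_2$ cost of moving the tail could a priori be large if $m_\zeta$ has heavy far-away tails. I would handle this by using the hypothesis $m_\zeta\in\cC^2$ together with finite mass (and, implicitly, finite second moment, which I would add as a standing assumption or derive from $\cM_{\by,\eps}$ plus integrability) to ensure the tail's contribution to the second moment is genuinely $\cO(\eps^{d+1})$; then $W_2^2$ of relocating that tail to $\bar\bx_\zeta$ is bounded by its second moment about $\bar\bx_\zeta$, which is $\cO(\eps^{d+1})$. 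The $\cC^2$ regularity of $K$ and $m_\zeta$ ensures $f,g$ are themselves $\cC^2$ probability densities so that Brenier/Caffarelli theory applies cleanly and the optimal maps in the intermediate comparisons are genuine maps rather than plans. Everything else is bookkeeping with the triangle inequality for $W_2$ and the explicit formula from part (i).
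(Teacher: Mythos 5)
Your part (i) is correct and is essentially the paper's argument: $f$ and $g$ are translates of $K$, the translation map is the gradient of a convex function, and~\eqref{EQ:W2 Affine Transform} (or a direct Brenier computation) gives~\eqref{EQ:W2 Conv Point Source}. Your honest flagging of the tail issue in part (ii) is also well taken --- the definition of $\cM_{\by,\eps}$ bounds only the mass, not the second moment, of what lies outside $B_\eps(\by)$, and the paper's own proof glosses over this.

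For part (ii), however, your route diverges from the paper's and contains a genuine quantitative gap. The paper does \emph{not} compare the source measures in $W_2$; it Taylor-expands both $K(\bx-\by)$ and $m_\zeta(\by)$ about $\by=\bar\bx_\zeta$ inside $B_\eps(\bar\bx_\zeta)$ to conclude that the \emph{convolved densities} satisfy $\zeta(\bx)=m(\bar\bx_\zeta)|B_\eps(\bar\bx_\zeta)|\,K(\bx-\bar\bx_\zeta)+\cO(\eps^{d+1})$ pointwise, i.e.\ $f$ and $g$ are $\cO(\eps^{d+1})$ \emph{functional} perturbations of translates of $K$, and then perturbs $W_2^2$ at first order in $\delta f$. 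Your approach instead replaces $m_\zeta$ by $M\delta_{\bar\bx_\zeta}$ and pushes the error through the triangle inequality, and this loses orders in two places. First, $W_2(m_\zeta, M\delta_{\bar\bx_\zeta})$ is only $\cO(\eps)$, not $\cO(\eps^{(d+1)/2})$: the two measures do not ``differ only in a region of mass $\cO(\eps^{d+1})$'' --- the bulk mass $M(1-\eps^{d+1})$ is spread over all of $B_\eps$ in one and concentrated at a point in the other, costing up to $M\eps^2$ in $W_2^2$. The contraction bound $W_2(K\circledast\mu, K\circledast\nu)\le W_2(\mu,\nu)$ then caps you at $\cO(\eps)$ and cannot recover the smoothing gain that the paper's pointwise expansion exploits (the paper's own Gaussian computation~\eqref{EQ:W2 Conv Expansion} shows the post-convolution shape dependence is $\cO(\eps^4)$ while the pre-convolution one is $\cO(\eps^2)$). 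Second, even granting an $\cO(\eps^{(d+1)/2})$ perturbation of each argument, your own inequality $|W_2^2(a,b)-W_2^2(a',b')|\le (W_2(a,b)+W_2(a',b'))(W_2(a,a')+W_2(b,b'))$ yields an error of size $\cO\big(|\bar\bx_f-\bar\bx_g|\cdot\eps^{(d+1)/2}\big)$, which is first order in the perturbation; the ``after squaring'' step does not convert this into $\cO(\eps^{d+1})$ unless $|\bar\bx_f-\bar\bx_g|$ is itself $\cO(\eps^{(d+1)/2})$. To reach the claimed rate you need the perturbation of the densities (not of the source measures) to already be $\cO(\eps^{d+1})$, which is exactly what the paper's Taylor expansion supplies.
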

\begin{proof}
(i) follows from the fact that in this setting, $\zeta(\bx):=Am_\zeta(\bx)=K(\bx-\bar\bx_{\zeta})$, $\zeta\in\{f, g\}$, which is only a translation of $K$ by $\bar\bx_\zeta$. To prove (ii), we first observe that inside the $B_\eps(\bar\bx_\zeta)$ we can write
\[
	m(\by)=m(\bar\bx_\zeta)+(\by-\bar\bx_\zeta)\cdot \nabla m(\bar\bx_\zeta)+\cO(|\by-\bar\bx_\zeta|^2),
\]
\[
	K(\bx-\by)=K(\bx-\bar\bx_\zeta) + (\bar\bx_\zeta-\by)\cdot \nabla K(\bx-\bar\bx_\zeta) +\cO(|\bar\bx_\zeta-\by|^2).
\]
Therefore we have
\begin{multline*}
	\zeta(\bx)=\int_{\bbR^d} K(\bx-\by) m(\by) d\by = \int_{B_\eps(\bar\bx_\zeta)} K(\bx-\by) m(\by) d\by +\cO(\eps^{d+1}) \\ 
	= m(\bar\bx_\zeta)K(\bx-\bar\bx_\zeta)|B_\eps(\bar\bx_\zeta)| + \cO(\eps^{d+1}).
\end{multline*}
This gives that $f$ and $g$ are simply perturbations of the translations of $K$, which then leads to the desired result.
\end{proof}

{\bf Deconvolution of highly localized sources.} The first important consequence of this result, described in part (i) of the theorem, is that the deconvolution from datum $g$, with an \emph{arbitrary} kernel $K$, to recover the location of a point source is a convex problem under the $W_2$ metric; see~\eqref{EQ:W2 Conv Point Source}. This is a simple but not obvious observation because once the source is parameterized in terms of the location, the convolution problem becomes nonlinear (with respect to the location). Part (ii) of the theorem generalizes point source to any highly concentrated sources by saying that to the leading order, the $W_2$ metric allows us to have a convex objective function in recovering the location of a highly localized object. 

{\bf Deconvolution from diffusive environments.} One aspect of the result that is surprising is that it does not require the convolution kernel to take a specific form, for instance, to come from a physical system governed by the transport phenomenon such as what we described in the previous subsection. This allows us to study deconvolution with smoothing kernels that could serve as models for the propagation of the information in $m$ in a diffusive environment. A kernel of particular importance is the Gaussian kernel
\begin{equation}\label{EQ:Kernel Gauss}
 	K(\bx)=\dfrac{\Lambda_d}{|\Sigma_K|^{1/2}} e^{-\frac{1}{2}\bx^t\Sigma_K^{-1} \bx},
\end{equation}
with $\Sigma_K\in\bbR^{d\times d}$ symmetric and positive-definite and the constant $\Lambda_d=(2\pi)^{-d/2}$. For this specific kernel, we could make the asymptotic calculation in the theorem more precise. Let us take the function $m$ to be of Gaussian type, as an approximation to the localized source we discussed in the theorem (when we make the covariance matrix small). More precisely, we take
\[
	m_{\zeta}(\bx)=\dfrac{\Lambda_d}{|\Sigma_{m_\zeta}|^{1/2}} e^{-\frac{1}{2}(\bx-\bx_{m_\zeta})^t \Sigma_{m_\zeta}^{-1}(\bx-\bx_{m_\zeta})},\ \ \zeta\in\{f, g\}
\]
with $\Sigma_{m_f}$ and $\Sigma_{m_g}$ symmetric and positive-definite. Then it is straightforward to verify that $f$ and $g$ are of the form:
\[
	\zeta(\bx)=\dfrac{\Lambda_d}{|\Sigma_K+\Sigma_{m_\zeta}|^{1/2}} e^{-\frac{1}{2}(\bx-\bx_{m_\zeta})^t(\Sigma_K+\Sigma_{m_\zeta})^{-1}(\bx-\bx_{m_\zeta})}.
\]
Following~\eqref{EQ:W2 Gaussians}, we have that
\begin{equation*}
	W_2^2(f, g)=|\bx_f-\bx_g|^2+{\rm Tr}(\Sigma_f+\Sigma_g-2(\Sigma_f^{1/2}\Sigma_g\Sigma_f^{1/2})^{1/2}),
\end{equation*}
where $\Sigma_\zeta=\Sigma_K+\Sigma_{m_\zeta}$. To mimic the case of localized source, we perform the rescaling $\Sigma_{m_f} \to \eps_f^2\Sigma_{m_f}$ and $\Sigma_{m_g} \to \eps_g^2\Sigma_{m_g}$ with $\eps_f$ and $\eps_g$ both small (but not necessarily the same). Then $W_2^2(f, g)$ simplifies to
\begin{multline}\label{EQ:W2 Conv Expansion}
	W_2^2(f, g) = |\bx_f-\bx_g|^2+\dfrac{1}{2} {\rm Tr} \Big(\Sigma_K(\eps_f^2\Sigma_K^{-1} \Sigma_{m_f}-\eps_g^2 \Sigma_K^{-1}\Sigma_{m_g})^t(\eps_f^2\Sigma_K^{-1} \Sigma_{m_f}-\eps_g^2 \Sigma_K^{-1}\Sigma_{m_g})\Big) \\
	+\dfrac{3}{8} \eps_g^4{\rm Tr} \Big(\Sigma_K (\Sigma_K^{-1}\Sigma_{m_g})^t(\Sigma_K^{-1}\Sigma_{m_g})\Big) + \cO(\eps_f^6+\eps_g^6+\eps_f^2\eps_g^4+\eps_f^4\eps_g^2).
\end{multline}
Taking the shapes of $m_f$ and $m_g$ to be the same, that is, $\Sigma_{m_f}=\Sigma_{m_g}$, we see that $W_2^2(f, g)$ is convex with respect to both $\bx_{m_f}-\bx_{m_g}$ and $\eps_f^2-\eps_g^2$ at the leading orders. This fact gives us the opportunity to deconvolve from Gaussian kernels to reconstruct stably the location and relative size of localized sources. Note, however, that due to the presence of the trace operator, $W_2^2(f, g)$ is less sensitive to the shape (or anisotropy) of the source encoded in its covariance matrix, compared to the size of the source.
	
\section{Numerical simulations}
\label{SEC:Num}

In this section, we present some numerical simulations to demonstrate some of the main phenomena that we analyzed in this work. We consider two different inverse data matching problems.

\subsection{Deconvolution under the $W_2$ metric}
\label{SUBSEC:Num Poisson}

We first show some simulation results on the deconvolution problem in the one-dimensional setting. Our numerical simulations can only be done in a finite domain, so we set $\Omega=[-\ell,\ \ell]$. The forward operator is the convolution on $\Omega$:
\begin{equation}\label{EQ:Conv Finite}
	f(m(x))=Am(x):=\int_{-\ell}^\ell K(x-y) m(y) dy,
\end{equation}
for a kernel $K(x)> 0$ with finite integral $\int_{-2\ell}^{2\ell} K(x) d x=\aver{K}$. We assume that the data $g$ is generated from a true function $m^*(x)\ge 0$ such that $\int_{-\ell}^\ell m^*(x) d x =\aver{m^*}$. This will ensure that $\int_{-\ell}^\ell g(x) dx=\aver{K}\aver{m^*}$. We will consider inverse matching with both noiseless and noisy data. In the case of noisy data, we add multiplicative noise to $g$ in a way such that $\int_{-\ell}^\ell g_\delta(x) dx=\aver{m^*}$.
\begin{figure}
	\centering
	\includegraphics[width=0.32\linewidth]{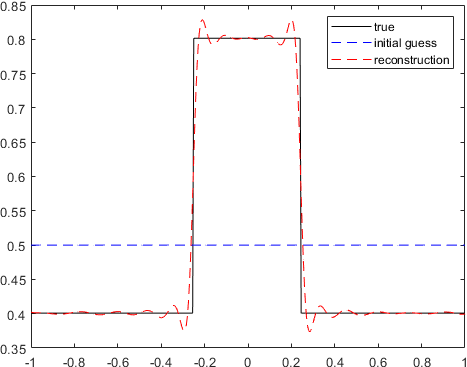}
	\includegraphics[width=0.32\linewidth]{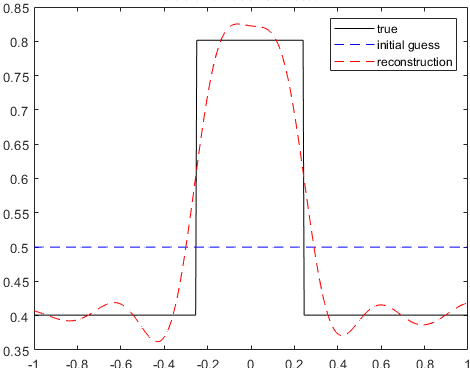}
	\includegraphics[width=0.32\linewidth]{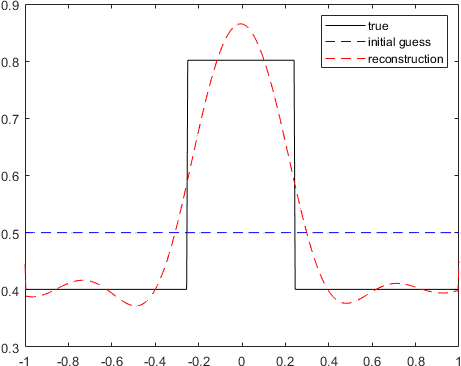}
	\includegraphics[width=0.32\linewidth]{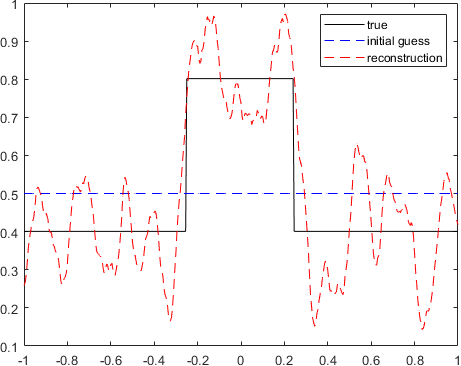}
	\includegraphics[width=0.32\linewidth]{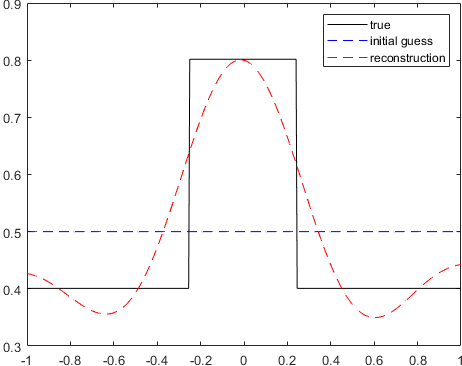}
	\includegraphics[width=0.32\linewidth]{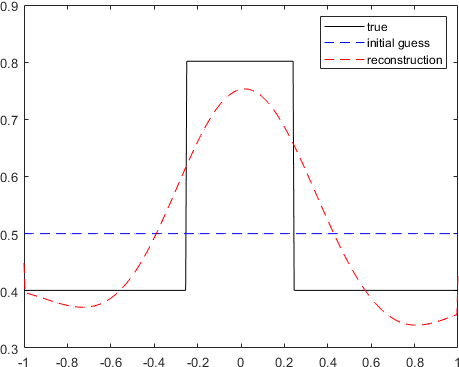}
	\caption{Deconvolution with a Laplacian kernel with the $L^2$ (left), $\dot\cH^{-1}$ (middle), and $W_2$ (right) metrics. Top row: deconvolution with noise-free data; Bottom row: deconvolution with data containing respectively $2\%$, $10\%$, and $10\%$ random noise.}
	\label{FIG:DeConv Gauss}
\end{figure}

In Figure.~\ref{FIG:DeConv Gauss} we show deconvolution results for the Laplacian kernel $K_L(x)=e^{-\ell |x|}$ with the $L^2$, $\dot\cH^{-1}$ and $W_2$ metrics, that is, using the objective functions $\Phi_{\cH^0}(m)$, $\Phi_{\dot\cH^{-1}}(m)$ and $\Phi_{W_2}(m)$. This set of results show clearly the smoothing effect of the quadratic Wasserstein metric: in both the noise-free and noisy data cases, $W_2$ give smoothed out reconstruction compared to its $L^2$ counterpart. The order of the smoothing is very similar to that of the $\dot\cH^{-1}$ norm, although the exact reconstructions are quite different. Compared to $\dot\cH^{-1}$, $W_2$ reconstructions seem to have slightly more variations. Both the $\dot\cH^{-1}$ and the $W_2$ reconstructions can tolerate strong high-frequency random noise, while the $L^2$ reconstructions break down quickly at the relatively low noise level. One can certainly add a regularization mechanism to stabilize the $L^2$ reconstructions here. However, our objective here is not to say that $W_2$ reconstruction is better than $L^2$ reconstruction, but to demonstrate the smoothing effect of the quadratic Wasserstein metric. Therefore we do not include any explicit regularization to the $L^2$ reconstruction (although our discretization of the forward problem, the adjoint problem as well as the mismatch between the forward and adjoint discretization, indeed introduce a regularization effect).

\begin{figure}
	\centering
	\includegraphics[width=0.32\linewidth]{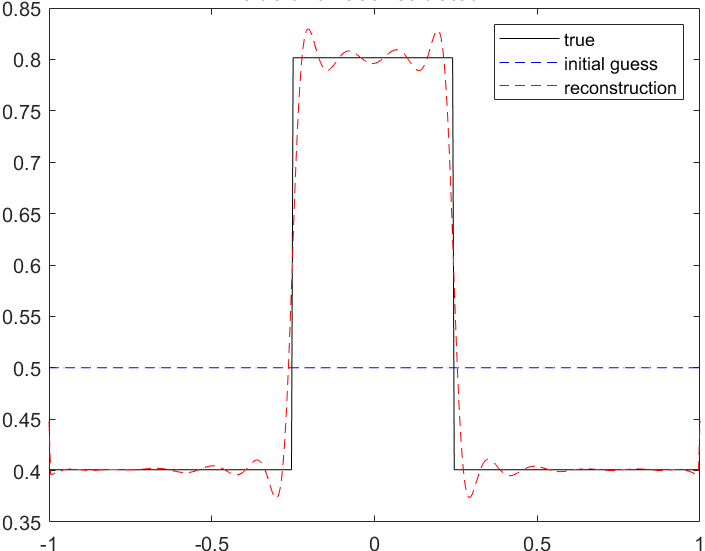}
	\includegraphics[width=0.32\linewidth]{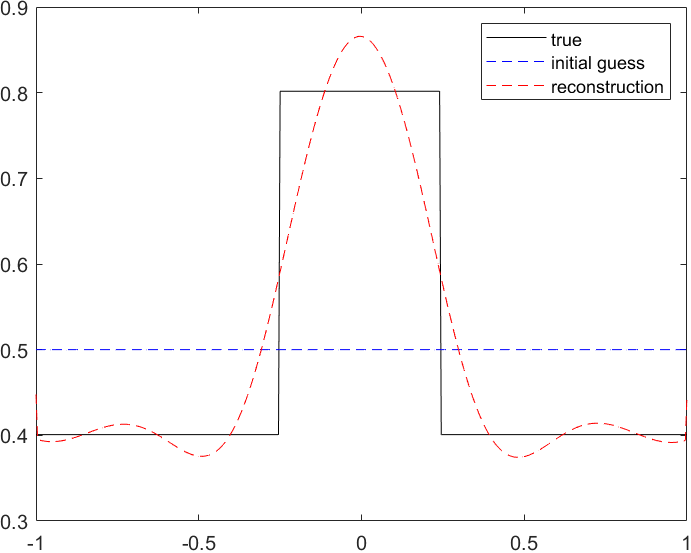}
	\includegraphics[width=0.32\linewidth]{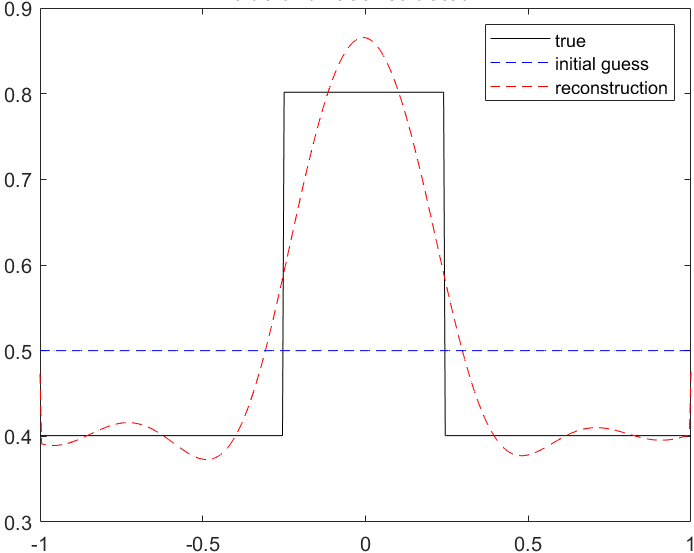}\\
	\includegraphics[width=0.32\linewidth]{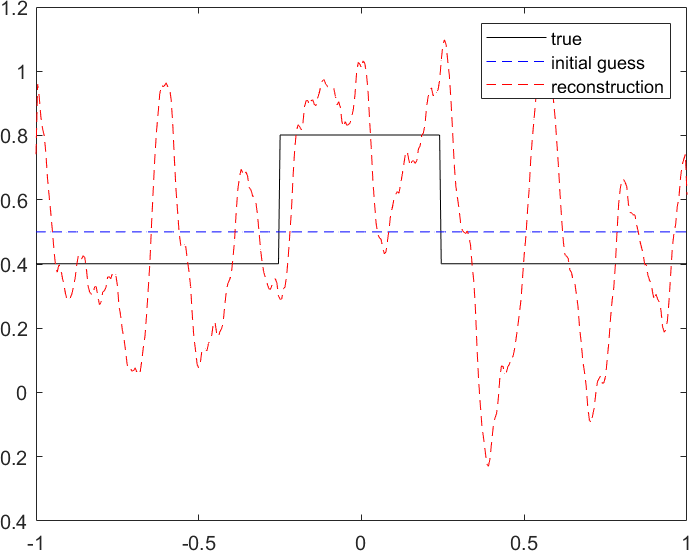}
	\includegraphics[width=0.32\linewidth]{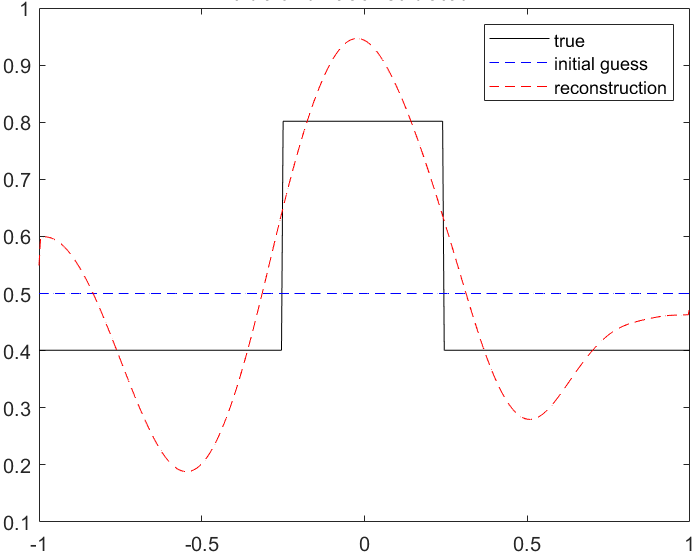}
	\includegraphics[width=0.32\linewidth]{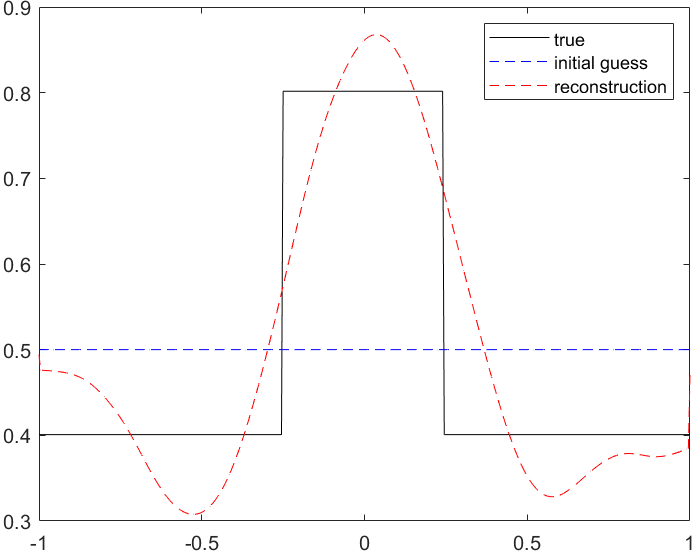}
	\caption{Deconvolution with the kernel $K_I(x)=\frac{1}{1+|x|}$ with the $L^2$ (left), $\cH^{-1}$ (middle), and $W_2$ (right) metrics. Top row: deconvolution with noise-free data; Bottom row: deconvolution with data containing respectively $2\%$, $10\%$, and $10\%$ random noise.}
	\label{FIG:DeConv Exp}
\end{figure}
We have done extensive numerical experiments on this problem with data at different noise levels and different convolution kernels. The same type of smoothing effect are observed in all the numerical tests with the quadratic Wasserstein metic. Interested readers are refer to~\cite{DiDuRe-Prep19} for more detailed description of such results. As an example, we show in Figure~\ref{FIG:DeConv Exp} deconvolution results from the kernel $K_I(x)=\frac{1}{1+|x|}$. 

\subsection{Inverse matching for an elliptic PDE problem}
\label{SUBSEC:PAT}

The second set of numerical simulations focuses on an inverse coefficient problem for the diffusion equation. Let $\Omega\subset\bbR^d$ be a bounded domain with boundary $\partial\Omega$ and consider the following second-order elliptic partial differential equation with Robin boundary condition:
\begin{equation}\label{EQ:Diff}
	-\nabla\cdot \gamma u(\bx) + \sigma(\bx) u(\bx) = 0,\ \ \mbox{in}\ \ \Omega, \qquad \bn\cdot \gamma \nabla u(\bx)+\kappa u=h(\bx),\ \ \mbox{on}\ \ \partial\Omega.
\end{equation}
Physically this equation can be used to describe the diffuse of particles, generated from source $h(\bx)\ge 0$, in an absorbing environment, such as propagation of near infra-red photons in biological tissues, with $\gamma(\bx)>0$ and $\sigma(\bx)> 0$ respectively the diffusion and absorption coefficients of the environment. 

For applications in quantitative photoacoustic imaging~\cite{BaRe-IP11,ReGaZh-SIAM13}, we are interested in reconstructing the absorption coefficient $\sigma$ in the equation~\eqref{EQ:Diff} from data that we measure in an experiment of the following form:
\begin{equation}\label{EQ:Data}
	f(\sigma):= \sigma(\bx) u(\bx), \qquad \bx\in\Omega
\end{equation}
where $u(\bx)$ is the solution to the diffusion equation. Due to the fact that the diffusion solution $u$ depends on the unknown $\sigma$ in a nonlinear way, this inverse problem is nonlinear. 

\begin{figure}
	\centering
	\includegraphics[width=0.32\linewidth]{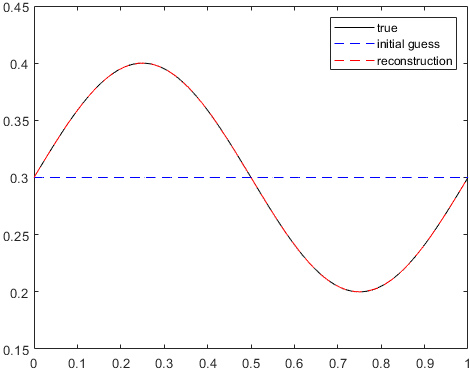}
	\includegraphics[width=0.32\linewidth]{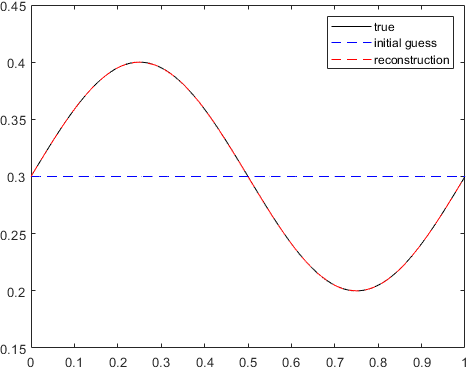}
	\includegraphics[width=0.32\linewidth]{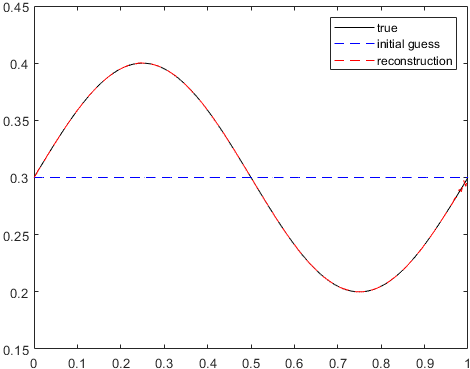}
	\includegraphics[width=0.32\linewidth]{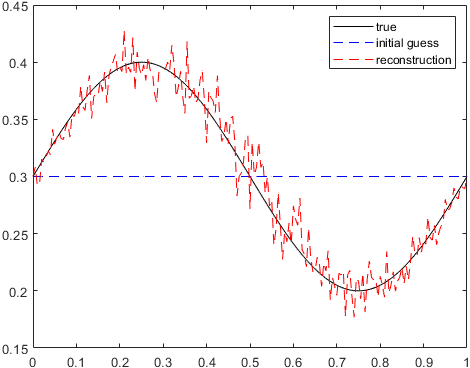}
	\includegraphics[width=0.32\linewidth]{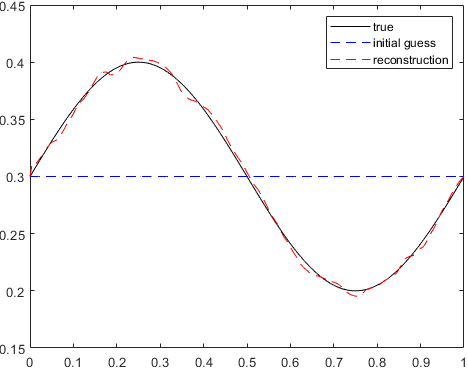}
	\includegraphics[width=0.32\linewidth]{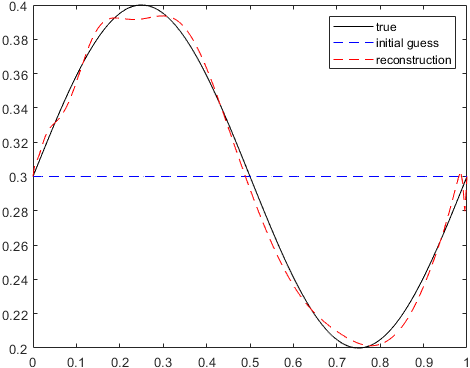}
	\caption{Inversion for $\sigma$ in $\Omega=(0, 1)$ with noise-free (top row) and noisy data (bottom row) under the $L^2$ (left), $\dot\cH^{-1}$ (middle) and $W_2$ (right) metrics. The noise level in the bottom row is $12\%$ for each case.}
	\label{FIG:PAT 1D}
\end{figure}
Let us assume that $\Omega$ is smooth, so that the diffusion equation~\eqref{EQ:Diff} admits a unique solution that is sufficiently regular when $\gamma$, $\sigma$ and $h$ are sufficiently regular. Moreover, from standard theory of elliptic partial differential equations~\cite{GiTr-Book00}, we conclude that $u$ is non-negative and bounded from above when the boundary source $h(\bx)$ is so. Therefore $f(\sigma)$ is nonnegative and bounded from above. We therefore can treat $f(\sigma)$ as a probability density for a given $\sigma$.

We first show in Figure~\ref{FIG:PAT 1D} some typical inversion results for the problem in the one-dimensional case where $\Omega=(0, 1)$. For simplicity, we set $\gamma=0.02$ in all the simulations we present here. Our extensive numerical tests show that the exact value of $\gamma$ has only a negligible impact on the inversion results. We again observe the smoothing effect of the $W_2$ metric as in the deconvolution example, and this smoothing effect makes the inversion very robust again high-frequency random noise in the data. This smoothing effect is independent of the dimension of the spatial domain, as seen from the analysis in the previous sections of the paper and observed in two-dimensional inversion results shown in Figure~\ref{FIG:PAT 2D}.
\begin{figure}
	\centering
	\includegraphics[width=0.25\linewidth]{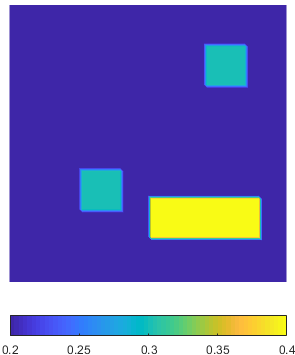}
	\includegraphics[width=0.24\linewidth]{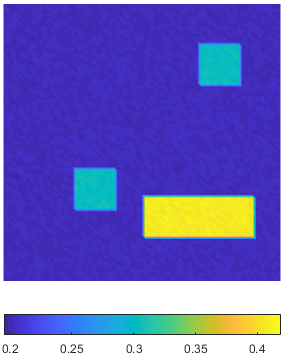}
	\includegraphics[width=0.24\linewidth]{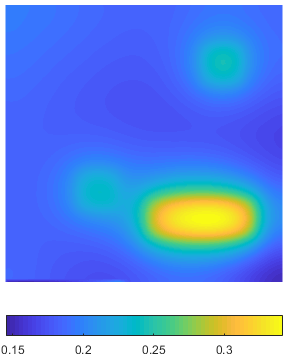}
	\includegraphics[width=0.24\linewidth]{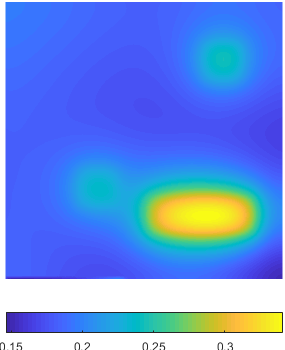}
	\caption{Inversion for $\sigma$ in two-dimensional case in the domain $\Omega=(0, 1)\times(0, 1)$ with data containing $10\%$ random noise. Shown from left to right are the true coefficient, the reconstructions with the $L^2$, $\dot\cH^{-1}$ and $W_2$ metrics respectively.}
	\label{FIG:PAT 2D}
\end{figure}

\section{Concluding remarks}
\label{SEC:Concl}

In this work, we performed analytical and computational studies on the effect of the quadratic Wasserstein metric in solving linear and nonlinear inverse data matching problems. We demonstrate, through analysis in the asymptotic regime, that the quadratic Wasserstein metric has a smoothing effect on the inversion results, that is, at a given noise level, matching results with $W_2$ is smoother than those with the $L^2$ metric. One can see this smoothing effect at the true inverse solution or at a local minimizer of the objective function (which could be far away from the true matching solution) when a numerical minimization approach is used to solve the matching problem. The order of the smoothing effect is the same as that of the $\dot\cH^{-1}$ metric. 

The smoothing effect of the quadratic Wasserstein metric in the non-asymptotic regime indicates that the landscape of the Wasserstein objective function and that of the $L^2$ objective function are quite different. In other words, if we solve the inverse matching problems with numerical minimization, the trajectory of the minimizing sequence based on the $L^2$ metric and that based on the Wasserstein metric is different. Even if the two minimization algorithms start at the same initial guess and converge to the same minimizer, their paths between the starting point and the final point are different. Finding good ways to precisely characterize the differences between the landscapes of the two minimization problems will be the subject of future works.

Even though the smoothing effect of $W_2$ is very similar to that of the $\dot\cH^{-1}$ metric, we showed, through analyzing some simple finite-dimensional inverse matching problems, such as the deconvolution of point sources from given kernels, that $W_2$ has better convexity than $\dot\cH^{-1}$ and the $L^2$ norm in the non-asymptotic regime. Characterizing such advantages in more practically useful situations would be an interesting project.

The quadratic Wasserstein metric we studied in this paper requires that the two data to be compared, $f$ and $g$, be both probability measures (and therefore be nonnegative) and they have the same total mass. This requirement severely limits the applicability of the Wasserstein metric to more general data matching problems. To use the Wasserstein metrics for data that are not nonnegative, for instance in data matching for seismic imaging applications~\cite{ChChWuYa-JCP18,EnFr-CMS14,EnFrYa-CMS16,MeBrMeOuVi-GJI16,MeBrMeOuVi-IP16,YaEn-Geophysics18,YaEnSuFr-Geophysics18}, one can perform some rescaling on the data to make them nonnegative. It seems, at least on the computational level, that the main features of the Wasserstein metrics are preserved by reasonable rescaling strategies such as $f \mapsto \alpha+\beta f$ and $f \mapsto e^{\beta f}$ ($\alpha$ and $\beta$ being appropriately chosen constants). There have been also attempts to generalize the original optimal transport framework to deal with signed data~\cite{AmMaSe-AIHP11,Mainini-JMS12}, although these have remained mostly on the theoretical level so far. To compare two nonnegative signals that do not have the same total mass, one can either use a normalization procedure, for instance by dividing the signals with their respective total mass so that they both have total mass $1$ (c.g.~\cite{ChChWuYa-JCP18,EnFrYa-CMS16,MeBrMeOuVi-IP16,YaEnSuFr-Geophysics18}), or consider generalized Wasserstein distances induced by unbalanced optimal transport. In terms of the later, the Wasserstein-Fisher-Rao metric~\cite{ChPeScVi-FCM18B,MeAlBrMeOuVi-Geophysics18} has been studied extensively in the literature. Analyzing the behavior of the quadratic Wasserstein metric in the aforementioned more complicated setups is an interesting topic for future research.

\section*{Acknowledgments}

We would like to thank the anonymous referees for providing useful comments on the first draft of this work and pointing out several related references. Their inputs helped us improve the quality of this work. This work is partially supported by the National Science Foundation through grants DMS-1620396, DMS-1620473, DMS-1913129, and DMS-1913309.



\end{document}

\begin{theorem}[\cite{DoLa-JMA82},~\cite{YaEn-Geophysics18}]
	Let $f$ and $g$ be generated from~\eqref{EQ:Conv} with $m_f$ and $m_g$ respectively. Assume that $m_f$ and $m_g$ have the same total mass. (i) When the kernel $K(\bx)$ is of Gaussian type:
	\begin{equation}\label{EQ:Kernel Gauss}
	 	K(\bx)=\dfrac{\Lambda_d}{|\Sigma_K|^{1/2}} e^{-\frac{1}{2}\bx^t\Sigma_K^{-1} \bx},
	\end{equation}
	with $\Sigma_K\in\bbR^{d\times d}$ symmetric and positive-definite and $\Lambda_d=(2\pi)^{-d/2}$, and 
	\[
		 m_{\zeta}(\bx)=\dfrac{\Lambda_d}{|\Sigma_{m_\zeta}|^{1/2}} e^{-\frac{1}{2}(\bx-\bx_{m_\zeta})^t \Sigma_{m_\zeta}^{-1}(\bx-\bx_{m_\zeta})},\ \ \zeta\in\{f, g\}
	\]
	with $\Sigma_{m_f}$ and $\Sigma_{m_g}$ symmetric and positive-definite, we have
	\begin{equation}\label{EQ:W2 Conv}
	W_2^2(f, g)=|\bx_f-\bx_g|^2+{\rm Tr}(\Sigma_f+\Sigma_g-2\Sigma_f^{1/2}\Sigma_g^{1/2}),
	\end{equation}
	where $\Sigma_\zeta=\Sigma_K+\Sigma_{m_\zeta}$. (ii) For any kernel function $K(\bx)$, if 
		\[
		 	m_{\zeta}(\bx)=\delta(\bx-\bx_\zeta),\ \ \zeta\in\{f, g\}
		\]
	then we have that
	\begin{equation}\label{EQ:W2 Conv Point Source}
	W_2^2(f, g)=|\bx_f-\bx_g|^2.
	\end{equation}
\end{theorem}